\theoremstyle{plain}
\newtheorem{lemma}{Lemma}[section]
  \newtheorem{proposition}[lemma]{Proposition}
  \newtheorem{corollary}[lemma]{Corollary}
  \newtheorem{theorem}[lemma]{Theorem}
  \newtheorem*{theorem*}{Theorem}
  \newtheorem*{fact*}{Fact}
  \newtheorem*{claim*}{Claim}
  \newtheorem{conjecture}[lemma]{Conjecture}
  \newtheorem{slogan}[lemma]{Slogan}
\theoremstyle{definition}
  \newtheorem{definition}[lemma]{Definition}
\theoremstyle{remark}
  \newtheorem{remark}[lemma]{Remark}
\newsavebox{\@brx}
\newcommand{\llangle}[1][]{\savebox{\@brx}{\(\m@th{#1\langle}\)}%
  \mathopen{\copy\@brx\kern-0.5\wd\@brx\usebox{\@brx}}}
\newcommand{\rrangle}[1][]{\savebox{\@brx}{\(\m@th{#1\rangle}\)}%
  \mathclose{\copy\@brx\kern-0.5\wd\@brx\usebox{\@brx}}}
\title{Finding a low-dimensional piece of a set of integers}
\author{Freddie Manners}
\address{Freddie Manners, Mathematical Institute, University of Oxford}
\email{manners@maths.ox.ac.uk}
\begin{document}

\newcommand{\eps}[0]{\varepsilon}

\newcommand{\AAA}[0]{\mathbb{A}}
\newcommand{\CC}[0]{\mathbb{C}}
\newcommand{\EE}[0]{\mathbb{E}}
\newcommand{\FF}[0]{\mathbb{F}}
\newcommand{\NN}[0]{\mathbb{N}}
\newcommand{\PP}[0]{\mathbb{P}}
\newcommand{\QQ}[0]{\mathbb{Q}}
\newcommand{\RR}[0]{\mathbb{R}}
\newcommand{\TT}[0]{\mathbb{T}}
\newcommand{\ZZ}[0]{\mathbb{Z}}

\newcommand{\cA}[0]{\mathscr{A}}
\newcommand{\cB}[0]{\mathscr{B}}
\newcommand{\cC}[0]{\mathscr{C}}
\newcommand{\cD}[0]{\mathscr{D}}
\newcommand{\cE}[0]{\mathscr{E}}
\newcommand{\cF}[0]{\mathscr{F}}
\newcommand{\cH}[0]{\mathscr{H}}
\newcommand{\cG}[0]{\mathscr{G}}
\newcommand{\cK}[0]{\mathscr{K}}
\newcommand{\cM}[0]{\mathscr{M}}
\newcommand{\cN}[0]{\mathscr{N}}
\newcommand{\cP}[0]{\mathscr{P}}
\newcommand{\cR}[0]{\mathscr{S}}
\newcommand{\cS}[0]{\mathscr{S}}
\newcommand{\cT}[0]{\mathscr{S}}
\newcommand{\cU}[0]{\mathscr{U}}
\newcommand{\cW}[0]{\mathscr{W}}
\newcommand{\cX}[0]{\mathscr{X}}
\newcommand{\cY}[0]{\mathscr{Y}}
\newcommand{\cZ}[0]{\mathscr{Z}}

\newcommand{\fg}[0]{\mathfrak{g}}
\newcommand{\fk}[0]{\mathfrak{k}}
\newcommand{\fZ}[0]{\mathfrak{Z}}

\newcommand{\bmu}[0]{\boldsymbol\mu}

\newcommand{\AUT}[0]{\mathbf{Aut}}
\newcommand{\Aut}[0]{\operatorname{Aut}}
\newcommand{\Frob}[0]{\operatorname{Frob}}
\newcommand{\GI}[0]{\operatorname{GI}}
\newcommand{\HK}[0]{\operatorname{HK}}
\newcommand{\HOM}[0]{\mathbf{Hom}}
\newcommand{\Hom}[0]{\operatorname{Hom}}
\newcommand{\Ind}[0]{\operatorname{Ind}}
\newcommand{\Lip}[0]{\operatorname{Lip}}
\newcommand{\LHS}[0]{\operatorname{LHS}}
\newcommand{\RHS}[0]{\operatorname{RHS}}
\newcommand{\Sub}[0]{\operatorname{Sub}}
\newcommand{\id}[0]{\operatorname{id}}
\newcommand{\image}[0]{\operatorname{Im}}
\newcommand{\poly}[0]{\operatorname{poly}}
\newcommand{\trace}[0]{\operatorname{Tr}}
\newcommand{\sig}[0]{\ensuremath{\tilde{\cS}}}
\newcommand{\psig}[0]{\ensuremath{\cP\tilde{\cS}}}
\newcommand{\metap}[0]{\operatorname{Mp}}
\newcommand{\symp}[0]{\operatorname{Sp}}
\newcommand{\dist}[0]{\operatorname{dist}}
\newcommand{\stab}[0]{\operatorname{Stab}}
\newcommand{\HCF}[0]{\operatorname{hcf}}
\newcommand{\LCM}[0]{\operatorname{lcm}}
\newcommand{\SL}[0]{\operatorname{SL}}
\newcommand{\GL}[0]{\operatorname{GL}}
\newcommand{\rk}[0]{\operatorname{rk}}
\newcommand{\sgn}[0]{\operatorname{sgn}}
\newcommand{\uag}[0]{\operatorname{UAG}}
\newcommand{\freiman}[0]{Fre\u{\i}man}
\newcommand{\tf}[0]{\operatorname{tf}}
\newcommand{\ev}[0]{\operatorname{ev}}
\newcommand{\bg}[0]{\operatorname{big}}
\newcommand{\sml}[0]{\operatorname{sml}}

\newcommand{\Conv}[0]{\mathop{\scalebox{1.5}{\raisebox{-0.2ex}{$\ast$}}}}
\newcommand{\bs}[0]{\backslash}

\newcommand{\heis}[3]{ \left(\begin{smallmatrix} 1 & \hfill #1 & \hfill #3 \\ 0 & \hfill 1 & \hfill #2 \\ 0 & \hfill 0 & \hfill 1 \end{smallmatrix}\right)  }

\newcommand{\uppar}[1]{\textup{(}#1\textup{)}}

\newcommand{\pol}[0]{\l}

\setcounter{tocdepth}{1}

\maketitle

\begin{abstract}
  We show that a finite set of integers $A \subseteq \ZZ$ with $|A+A| \le K |A|$ contains a large piece $X \subseteq A$ with \freiman{} dimension $O(\log K)$, where large means $|A|/|X| \ll \exp(O(\log^2 K))$.  This can be thought of as a major quantitative improvement on \freiman{}'s dimension lemma; or as a ``weak'' \freiman{}--Ruzsa theorem with almost polynomial bounds.

  The methods used, centered around an ``additive energy increment strategy'', differ from the usual tools in this area and may have further potential.

  Most of our argument takes place over $\FF_2^n$, which is itself curious.  There is a possibility that the above bounds could be improved, assuming sufficiently strong results in the spirit of the Polynomial \freiman{}--Ruzsa Conjecture over finite fields.
\end{abstract}

\tableofcontents{}

\section{Introduction}
\subsection{Motivation and statements}

Our primary motivation comes from considering \freiman{}-type theorems for subsets of the integers, of the following sort.

\begin{theorem}
  \label{thm:fr}
  Suppose $A \subseteq \ZZ$ and $|A+A| \le K |A|$.  Then there exists a generalized arithmetic progression $P$ of rank at most $C_1(K)$, such that $|P| \le C_2(K) |A|$ and $|A \cap P| \ge |A| / C_3(K) $, for some quantities $C_1, C_2, C_3$ depending only on $K$.
\end{theorem}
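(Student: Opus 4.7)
The plan is to deduce Theorem~\ref{thm:fr} by combining the paper's main result (stated in the abstract) with a \freiman--Ruzsa type structure theorem in the regime of low \freiman{} dimension, which is exactly the ``low-dimensional piece'' that the main theorem produces.

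First, I apply the main theorem to $A$ to obtain $X \subseteq A$ of \freiman{} dimension at most $d := O(\log K)$, with $|A|/|X| \le \exp(O(\log^2 K))$.  Since $X \subseteq A$, trivially $|X + X| \le |A + A| \le K|A| \le \exp(O(\log^2 K)) \cdot |X|$, so $X$ inherits a doubling constant $K' = \exp(O(\log^2 K))$.

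By definition of \freiman{} dimension (of an appropriate order, which I allow myself to build into the hypothesis of the main theorem at the cost of extra polylogarithmic factors), $X$ is \freiman{} isomorphic to a set $X' \subseteq \ZZ^d$ which is not contained in any proper affine subspace.  I then invoke a \freiman--Ruzsa theorem in low dimension: a subset of $\ZZ^d$ which affinely spans $\ZZ^d$ and has doubling $K'$ is contained in a GAP $P' \subseteq \ZZ^d$ of rank $O(d)$ and size $\exp(\poly(\log K')) |X'|$.  With $d = O(\log K)$ and $\log K' = O(\log^2 K)$ this gives rank $O(\log K)$ and size at most $\exp(\poly(\log K)) \cdot |X|$.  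Pulling $P'$ back along the \freiman{} isomorphism yields $P \subseteq \ZZ$ of the same rank and size containing $X$, so that $|A \cap P| \ge |X| \ge |A|/\exp(O(\log^2 K))$.  This produces Theorem~\ref{thm:fr} with $C_1(K) = O(\log K)$ and $C_2(K), C_3(K) = \exp(\poly(\log K))$.

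The main obstacle is, of course, the dimension reduction step itself, i.e.\ the main theorem of the paper, and is the bulk of the work.  A secondary technical point is choosing the order $s$ of the \freiman{} isomorphism in step two: to transport a full GAP (not merely a subset) back from $\ZZ^d$ to $\ZZ$ one needs $s$ large enough to see the structure of $P'$, not just $X'$.  This will require either stating the main theorem for \freiman{} $s$-isomorphisms with $s$ depending (mildly) on the desired rank, or a modeling-lemma argument at the pullback stage.  The low-dimensional \freiman--Ruzsa input is classical and the losses there are absorbed into the quasipolynomial bounds we already incur.
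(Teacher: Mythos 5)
Theorem~\ref{thm:fr} is background in this paper: it is stated and cited to \freiman{} and Sanders, not proved.  The nearest thing to an internal proof is the Corollary that follows Theorem~\ref{thm:main-theorem}, and that derivation is exactly your two-step plan -- apply the main theorem to obtain a subset $X \subseteq A$ of \freiman{} dimension $O(\log K)$ and bounded density drop, then feed it into the Chang / Tao--Vu Chapter~5.6 argument, which produces a GAP of rank equal to $\dim(X)$.  So at the level of strategy you have reproduced the paper's own route.

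Two remarks on the details.  First, your worry about the order $s$ of the \freiman{} isomorphism when transporting $P'$ back is not a real obstacle.  The inverse map $\phi^{-1} \colon X' \to X \subseteq \ZZ$ is a \freiman{} $2$-homomorphism defined on a set $X'$ that affinely generates $\ZZ^d$, and any such map extends uniquely to an affine homomorphism $\psi \colon \ZZ^d \to \ZZ$; then $P := \psi(P')$ is a GAP in $\ZZ$ of rank at most $\rk(P')$ and cardinality at most $|P'|$ containing $X$.  No higher-order isomorphism or modeling lemma is needed.  Second, the bound $C_2(K), C_3(K) = \exp(\poly(\log K))$ you claim is stronger than what the paper records, namely $O(\exp(\exp(O(\log^2 K))))$, and is not justified as written.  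After passing to $X$ the inherited doubling constant is $K' = K \cdot |A|/|X| = \exp(O(\log^2 K))$, and the size loss in the black-box low-dimensional step is, as usually stated, exponential in a power of $K'$; this is precisely what produces the paper's double exponential.  To get quasipolynomial $C_2, C_3$ you would need a low-dimensional \freiman{}--Ruzsa input whose size loss is of the form $K'^{O(d)}$ (or better), with $d = O(\log K)$; such a bound may indeed hold, and the paper's own remark acknowledges ``there is perhaps scope for optimizing this argument,'' but you should state and source that input explicitly rather than asserting the losses are classical and automatically absorbed.  None of this affects the truth of Theorem~\ref{thm:fr} as stated, since it only asks for finite $C_1, C_2, C_3$, so the proposal does succeed for that purpose.
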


If one insists that $A \subseteq P$ then one cannot do better than $C_1(K) = K^{O(1)}$ and $C_2(K), C_3(K) = \exp\left(K^{O(1)}\right)$; such a result is due to \freiman{} \cite{freiman-bilu, freiman}.  If, as stated, we allow just a large subset of $A$ to be contained in $P$, the current overall best bounds in Theorem \ref{thm:fr} are due to Sanders \cite{sanders}, and are of the form $C_1(K) = O(\log^4 K)$, $C_2(K), C_3(K) = \exp(O(\log^4 K))$.

It is also possible to control the rank $C_1(K)$ optimally by $O(\log K)$, at the expense of poorer bounds on $C_2$ and $C_3$; see e.g.~\cite{bilu-1, bilu-2, freiman-bilu, green-tao-freiman-bilu}.

It has been widely conjectured that one can take $C_1(K) = O(\log K)$ while also insisting that $\log(C_2(K))$ and $\log(C_3(K))$ are both $O\big(\log^{1 + o(1)} K\big)$ (and indeed sharper conjectures could be made).

This paper has nothing to add to the state of the art in Theorem \ref{thm:fr}.  Instead, we consider a model problem that asks for significantly less structural information than Theorem \ref{thm:fr}.  To state this, we briefly recall the notion of \freiman{} dimension.

\begin{definition}
  Let $G, G'$ be abelian groups and $A \subseteq G$, $A' \subseteq G'$ finite subsets.  We say a function $\phi \colon A \to A'$ is a \emph{\freiman{} homomorphism} if $\phi(x) + \phi(w) = \phi(y) + \phi(z)$ whenever $x,y,z,w \in A$ and $x+w=y+z$.  We say it is a \emph{\freiman{} isomorphism} if it has an inverse that is a \freiman{} homomorphism.

  Now let $A \subseteq \ZZ$.  The \emph{\freiman{} dimension} of $A$, denoted $\dim(A)$, is the largest positive integer $d$ such there exists $A' \subseteq \ZZ^d$ which affinely generates $\ZZ^d$ (that is, $A'$ is not contained in any proper affine subspace of $\ZZ^d$), such that $A$ is \freiman{} isomorphic to $A'$.
\end{definition}

A key classical result about \freiman{} dimension, due to \freiman{} \cite{freiman} (or see e.g.~\cite[Lemma 5.13]{tao-vu}), is the following.

\begin{theorem}[{\freiman{} dimension lemma}]
  \label{thm:freiman-dim}
  Let $A \subseteq \ZZ$ be such that $|A+A| \le K |A|$.  Then $\dim(A) \le K - 1 + O(K^2/|A|)$.
\end{theorem}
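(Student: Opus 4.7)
The plan is to work with a Freiman-isomorphic copy of $A$ realizing its dimension. Set $d = \dim(A)$; since Freiman isomorphism preserves both $\dim$ and the ratio $|A+A|/|A|$, I may assume $A \subseteq \ZZ^d$ affinely spans $\ZZ^d$, and fix $d+1$ affinely independent elements $a_0, a_1, \ldots, a_d \in A$, translated so that $a_0 = 0$.

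The main idea is to play the $d+1$ translates $X_i := a_i + A \subseteq A+A$ against the hypothesis $|A+A| \le K|A|$. Each $X_i$ has size $|A|$, and since $|X_i \cap X_j| = r_{A-A}(a_j - a_i)$ where $r_{A-A}$ denotes the representation function of $A - A$, the Bonferroni inequality gives
\[
K|A| \;\ge\; |A+A| \;\ge\; \left|\bigcup_{i=0}^d X_i \right| \;\ge\; (d+1)|A| - \sum_{0 \le i < j \le d} r_{A-A}(a_j - a_i).
\]
Rearranging yields $d \le K - 1 + |A|^{-1} \sum_{i<j} r_{A-A}(a_j - a_i)$, reducing the statement to showing that $a_0, \ldots, a_d$ can be chosen so that $\sum_{i<j} r_{A-A}(a_j - a_i) = O(K^2)$.

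The key combinatorial property of an affinely independent tuple is that the $d(d+1)$ nonzero differences $a_j - a_i$ (for $i \ne j$) are pairwise distinct: any coincidence $a_j - a_i = a_\ell - a_k$ would rearrange to a forbidden affine relation $a_j + a_k = a_i + a_\ell$. Since each $r_{A-A}(a_j - a_i) \ge 1$, this gives the lower bound $\sum_{i<j} r_{A-A}(a_j - a_i) \ge \binom{d+1}{2}$, which in the target regime $d = O(K)$ is already $\Theta(K^2)$; so the goal is to match this lower bound up to constants by a suitable choice of tuple.

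The main obstacle is establishing this matching upper bound. A crude estimate $\sum r \le |A|^2 - |A|$ (via the identity $\sum_{x \ne 0} r_{A-A}(x) = |A|^2 - |A|$) only delivers $d \le K - 1 + O(|A|)$, which is useless. To sharpen, I would build $a_0, \ldots, a_d$ greedily: at each stage, extend the current affinely independent partial tuple by an element of $A$ lying outside its affine hull and minimizing the added contribution $\sum_{j<i} r_{A-A}(a_i - a_j)$. Since $A$ affinely spans $\ZZ^d$, many admissible extensions exist at every stage; an averaging argument over these, combined with the Pl\"unnecke--Ruzsa bound $|A - A| \le K^2 |A|$ to control the aggregate mass of $r_{A-A}$, should cap each stage's contribution at $O(d)$ and produce the target $O(d^2) = O(K^2)$. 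Making this greedy (or probabilistic) selection step rigorous, and controlling how minimizing choices at early stages constrain later ones, is the technical heart of the argument.
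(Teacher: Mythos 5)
Your reduction is valid up to the step you flag as the technical heart: Bonferroni and the distinctness of the differences $a_j - a_i$ do yield
\[
  d \le K - 1 + \frac{1}{|A|}\sum_{0 \le i < j \le d} r_{A-A}(a_j - a_i),
\]
so the task becomes exhibiting a simplex with $\sum_{i<j} r_{A-A}(a_j - a_i) = O(K^2)$. That claim is false, not merely unproven. Take $A$ \freiman{}-isomorphic to the grid $G = \{0,\dots,N-1\}\times\{0,1\}\subseteq\ZZ^2$, e.g.\ $A = \{0,\dots,N-1\}\cup\{M,\dots,M+N-1\}\subseteq\ZZ$ with $M>2N$. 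Then $|A|=2N$ and $|A+A|=6N-3$, so $K = 3 - \tfrac{3}{2N}$, and one checks $\dim A = 2$, so the theorem is essentially tight here. But any affinely independent triple in $G$ has two points in one row and one in the other, say $a_0=(i,0)$, $a_1=(j,0)$, $a_2=(k,1)$, and a direct count of the representation function $r_{G-G}$ gives
\[
  \sum_{0 \le p<q \le 2} r_{G-G}(a_q-a_p) = 2\bigl(N-|j-i|\bigr)+\bigl(N-|k-i|\bigr)+\bigl(N-|k-j|\bigr) \ge N+3,
\]
so the minimum over all admissible simplices is $N+3$, on the order of $|A|$, vastly larger than your target $O(K^2)=O(1)$. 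No greedy or averaging refinement can close this gap, because the minimum itself is too large.

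The failure is informative. For this grid, the best simplex yields $\bigl|\bigcup_{i}(a_i+A)\bigr| = 5N-2$, about $|A|/2$ short of $|A+A| = 6N-3$; so your very first inequality $|A+A|\ge\bigl|\bigcup_i(a_i+A)\bigr|$ already discards a constant fraction of $A+A$, and Bonferroni then drops the compensating higher-order overlaps. Together these truncations can give at best $d \le K-1+O(1)$, which is weaker than $K-1+O(K^2/|A|)$. The argument cited in the paper (\freiman{}; \cite[Lemma 5.13]{tao-vu}) instead establishes the sharper form $|A+A|\ge(d+1)|A|-\binom{d+1}{2}$ directly, by a more global count --- roughly, ordering $A$ along a generic linear functional and telescoping --- rather than by bounding the union of a fixed family of $d+1$ translates; the grid above is an equality case of that sharper inequality, which is exactly why the union-of-translates bound cannot reach it.
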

In fact a slightly sharper result is proven, but the details of this need not concern us currently.

There is at least some connection between \freiman{} dimension and bounds in Theorem \ref{thm:fr}.  Specifically, Chang's proof of \freiman{}'s theorem \cite{chang} shows that one may take the rank of the progression $P$ to be  \emph{precisely} the \freiman{} dimension $\dim(A)$; the same bound is obtained by the proof in \cite[Chapter 5.6]{tao-vu}.

The bound in Theorem \ref{thm:freiman-dim} is essentially sharp: for instance, consider a set such as $\{a + 100^b N \colon 1 \le a \le N,\, 1 \le b \le k \}$ which has doubling about $(k+1)$ and \freiman{} dimension about $k$.  However, examples with very large \freiman{} dimension appear all to look like a union of a few structured pieces as in this case, each of which individually has much smaller \freiman{} dimension.  So, it is reasonable to expect a much stronger bound to hold if we are allowed to pass to a large \emph{subset} of $A$.  Note this is entirely analogous to the situation in Theorem \ref{thm:fr}.

Hence, the following is -- at least in some respects -- a model problem on the road towards polynomial bounds in Theorem \ref{thm:fr}.

\begin{conjecture}
  \label{conj:main}
  Suppose $A \subseteq \ZZ$ is a finite set and $|A+A| \le K |A|$.  Then we can find a subset $X \subseteq A$ with $|X| \ge |A| / K^{O(1)}$ and $\dim(X) = O(\log K)$.
\end{conjecture}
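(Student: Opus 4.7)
My approach would proceed in three phases: a modeling step moving $A$ into a convenient finite-field model, an energy-increment iteration in that model to extract a low-dimensional piece, and a transfer back to $\ZZ$.

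Phase 1 (modeling). First I would apply the \freiman{} dimension lemma (Theorem \ref{thm:freiman-dim}) to place $A$ \freiman{}-isomorphically inside $\ZZ^d$ with $d \le K-1+O(K^2/|A|)$; we may assume $|A|$ is not too small, else the conclusion is vacuous. Composing with Ruzsa's modeling lemma, and then with a carefully chosen dyadic coordinate expansion (choosing enough binary digits to avoid carries across $s$-fold sums for a fixed small $s$), I would arrange to work with a \freiman{} $s$-isomorphic copy $B$ of $A$ inside $\FF_2^n$ for some large $n$, with doubling comparable to $K$. This is analogous to how Ruzsa's proof of \freiman{}'s theorem reduces the integer problem to working in $\ZZ/N\ZZ$.

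Phase 2 (energy increment in $\FF_2^n$). In the finite-field setting, the \freiman{} dimension of a subset $X \subseteq \FF_2^n$ is essentially its affine dimension, so the goal becomes: find a large $X \subseteq B$ contained in an affine subspace of dimension $O(\log K)$. I would iterate the following dichotomy. Either a nontrivial fraction of $B$ already lies in a coset of some subgroup of dimension $O(\log K)$, in which case we restrict and are done; or the additive energy of $B$ is diffuse enough that a Balog--Szemer\'edi--Gowers / Sanders-type argument extracts a refined subset $B' \subseteq B$ whose Fourier mass is concentrated on a proper subgroup, producing either (i) a drop in the ambient affine dimension by a constant, or (ii) a constant gain in relative density. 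Tracking a joint potential of the form $(\text{current dimension}) + C \log(|B|/|B'|)$, the iteration must terminate after $O(\log K)$ rounds at a set of dimension $O(\log K)$.

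Phase 3 (transfer back). The low-dimensional affine piece of $B$ pulls back under the \freiman{} isomorphism to a subset $X \subseteq A$ with $\dim(X)$ bounded by its affine dimension in $\FF_2^n$, which is $O(\log K)$; the size of $X$ is $|A|$ divided by the cumulative loss from Phase 2.

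The main obstacle will be the quantitative cost of Phase 2. Each application of the standard black boxes (Balog--Szemer\'edi--Gowers, Sanders's quasi-polynomial Bogolyubov) loses a factor polynomial in $K$; over $O(\log K)$ iterations this compounds to a loss of the form $K^{O(\log K)} = \exp(O(\log^2 K))$. This would prove the \freiman{} dimension bound of Conjecture \ref{conj:main} but with a size loss matching the abstract's $\exp(O(\log^2 K))$, rather than the polynomial $K^{O(1)}$ of the conjecture. Achieving the polynomial bound seems to require an input roughly of the strength of the Polynomial \freiman{}--Ruzsa Conjecture over $\FF_2^n$, as the abstract itself suggests; without such input, I would not expect to close the gap.
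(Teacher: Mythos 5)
First, note that the statement you are trying to prove is labelled a \emph{conjecture} in the paper; the paper explicitly says it fails to prove Conjecture~\ref{conj:main} and instead establishes the weaker Theorem~\ref{thm:main-theorem} with loss $\exp(O(\log^2 K))$ in place of $K^{O(1)}$. Your proposal honestly admits the same shortfall, so at best it is an attempt at Theorem~\ref{thm:main-theorem}. However, even as an attempt at that weaker result it has a fatal flaw.

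The problem is Phase~1 (and consequently Phase~3). There is \emph{no} \freiman{} $s$-isomorphic copy of $A\subseteq\ZZ$ inside $\FF_2^n$ once $|A|\ge 2$ and $s\ge 2$. Indeed, if $\phi\colon A\to B\subseteq\FF_2^n$ were a \freiman{} $2$-isomorphism, then the inverse $\phi^{-1}$ must be a \freiman{} homomorphism; but for any $b,b'\in B$ we have $b+b=0=b'+b'$ in $\FF_2^n$, so $\phi^{-1}$ would have to satisfy $2\phi^{-1}(b)=2\phi^{-1}(b')$ in $\ZZ$, i.e.\ $\phi^{-1}(b)=\phi^{-1}(b')$, forcing $\phi^{-1}$ to be constant. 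Avoiding carries in the binary expansion makes the \emph{forward} map a \freiman{} homomorphism, but it cannot repair the inverse: $b_1+b_2=b_3+b_4$ over $\FF_2$ is strictly weaker than $a_1+a_2=a_3+a_4$ over $\ZZ$. Since the modeling step is impossible, the transfer in Phase~3 (bounding $\dim(X)$ by an affine dimension in $\FF_2^n$) has nothing to rest on. The paper also does \emph{not} use a \freiman{} model of $A$ in $\FF_2^n$: it keeps $A$ in $\ZZ^d$ (with $d=\dim(A)$) and analyzes the \emph{projection} of $A$ modulo $2$, i.e.\ a derived weight function $\eta$ on $(\ZZ/2\ZZ)^d$, combined with an exact $U^3$ analogue of the sharp Young inequality; this is structurally quite different from your Phase~2, which the paper explicitly contrasts with Sanders-style Fourier/Bogolyubov machinery.
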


This is equivalent to what is called the ``Weak Polynomial \freiman{}--Ruzsa Conjecture'' in \cite{chang-pfr}.  It is a consequence of any sufficiently strong analogue of Theorem \ref{thm:fr} over $\ZZ^d$ (for all $d$).  It has some applications in its own right; e.g.~\cite[Corollary 2]{chang-pfr} uses it to deduce a weak version of the Erd\H{o}s--Szemer\'edi sum-product conjecture.

%

In this paper, we fail to prove Conjecture \ref{conj:main}, but obtain the following near miss as our main result.

\begin{theorem}
  \label{thm:main-theorem}
  Let $A \subseteq \ZZ$ be a finite set with $E(A) \ge |A|^3 / K$.  Then we can find a subset $X \subseteq A$ such that $\dim(X) = O(\log K)$ and $|A|/|X| \ll \exp(O(\log^2 K))$.
\end{theorem}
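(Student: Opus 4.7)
The strategy is an \emph{additive energy increment}. I would construct a nested sequence $A = X_0 \supseteq X_1 \supseteq \cdots \supseteq X_T$ such that, at each step, either $\dim(X_t) = O(\log K)$ and the process halts, or else the normalized energy $E(X_{t+1})/|X_{t+1}|^3$ at least doubles while $|X_{t+1}| \ge |X_t|\exp(-O(\log K))$. Because the normalized energy starts at $\ge 1/K$ and is capped by $1$, the iteration must stop after $T \le \log_2 K$ steps, yielding a total size loss of $\exp(O(\log^2 K))$, which is exactly the bound claimed.

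The bulk of the work is then the single-step increment: given $X$ with $E(X)/|X|^3 \ge 1/K'$ and $\dim(X) > C\log K$, produce $X' \subseteq X$ with $|X'| \ge |X|\exp(-O(\log K))$ and $E(X')/|X'|^3 \ge 2/K'$. I would argue this inside $\FF_2^n$: Balog--Szemer\'edi--Gowers, applied to the energy hypothesis, lets me pass to a subset with bounded doubling, and Ruzsa's modelling lemma then produces a \freiman{} $8$-isomorphic copy $\tilde X \subseteq \FF_2^n$ for some $n$. Since \freiman{} dimension is preserved by $s$-isomorphism for $s \ge 2$, the condition $\dim(X) > C\log K$ forces $\tilde X$ to affinely span a subspace of large dimension in the model. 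The plan inside $\FF_2^n$ is to locate an affine coset $v+V \subseteq \FF_2^n$ of codimension at least $\Theta(\log K)$ below $\dim\langle \tilde X\rangle$ whose intersection with $\tilde X$ contains an $\exp(-O(\log K))$ fraction of its mass, and to argue that restricting to this intersection concentrates the convolution of $\tilde X$ by the required factor of two. A natural source of such a $V$ is a Bogolyubov-type step: an iterated sumset of $\tilde X$ contains a subspace of the desired codimension, which after translation to a popular coset gives the required large intersection with $\tilde X$.

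The principal obstacle is fitting both the energy gain and the size loss into a shared quantitative budget: one must obtain a genuine constant-factor gain in normalized energy at each step, and only $\exp(O(\log K))$ loss in size, with bounds strong enough to survive $O(\log K)$ iterations --- significantly sharper than what would follow from a naive combination of Balog--Szemer\'edi--Gowers with the existing quasi-polynomial Bogolyubov--Sanders machinery. A secondary technical issue is that the modelling reductions must be threaded through the iteration consistently, so that the \freiman{} dimension bound proved for the final $X_T$ in an $\FF_2^n$ model transfers back to a bound on $\dim(X_T)$ inside $\ZZ$; this likely requires maintaining a \freiman{} isomorphism of sufficiently high order throughout.
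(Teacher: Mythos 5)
Your high-level strategy --- an iterative normalised-energy increment, capped above by $1$, halting once the \freiman{} dimension is $O(\log K)$, and multiplying density losses across $O(\log K)$ steps to get $\exp(O(\log^2 K))$ --- does match the outer shell of the paper's argument. But the engine you propose for the single-step increment is different from the paper's and, as sketched, has a genuine gap.

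The paper does \emph{not} use Balog--Szemer\'edi--Gowers nor Ruzsa modelling, and in fact states explicitly that it works with energy directly. It also works with the normalised \emph{$U^3$} energy $\cE(A) = \|1_A\|_{U^3}/\|1_A\|_2$ rather than the $U^2$ energy. The $\FF_2$-structure in the paper does not arise from a modelling lemma: instead, the \freiman{} dimension $d$ gives (by definition) a \freiman{}-isomorphic copy $A' \subseteq \ZZ^d$ affinely generating $\ZZ^d$, and the paper studies the distribution $\eta$ of $A'$ modulo $2$, an element of $(\ZZ/2\ZZ)^d$. The crucial mechanism is a Brascamp--Lieb-type inequality for the $U^3$ norm on $\ZZ^d$ (Proposition \ref{prop:zd-bl-inequality}), which bounds $\|1_{A'}\|_{U^3}$ by $\|\eta^2\|_{U^2}$. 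This is what links ``large normalised $U^3$ energy and large dimension $d$'' to ``$\eta$ concentrated on few residues mod $2$''; that concentration is then exploited via the Gowers--Cauchy--Schwarz fibering inequality (Lemma \ref{lem:gowers-cauchy-schwarz}) and a stability theorem for near-extremisers of the $U^3$ norm to yield a $(1+\eps)$ energy gain at a $K^{O(1)}$ density cost.

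The concrete gap in your proposal is the sentence asserting that restricting to a popular coset of a subspace produced by a Bogolyubov-type argument ``concentrates the convolution of $\tilde X$ by the required factor of two''. No mechanism is offered, and none is available from the tools you name. Bogolyubov locates a subspace inside an iterated sumset; it says nothing about energy increase upon restriction, and with the Sanders bounds you would in any case obtain codimension $O(\log^4 K)$ rather than $O(\log K)$, which would not give the claimed halting condition $\dim(X_t) = O(\log K)$. The paper's increment per step is also only a modest factor $(1+\eps/2)$, not a doubling, and even that requires the careful three-case analysis (discarding small fibres / passing to one fibre / the near-extremal structure case) --- precisely the part your proposal leaves open. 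You acknowledge this obstacle yourself, but it is in fact the whole content of the theorem; without a substitute for the $U^3$/mod-$2$ Brascamp--Lieb mechanism, the proposal does not close.
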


Note this is stated in terms of the additive energy
\[
  E(A) = \# \{ x,y,z,w \in A \colon x - y = z - w \}
\]
of the set rather than its doubling constant.  Since a set with small doubling has large energy, the energy formulation is at least as strong. In fact two formulations are essentially equivalent, by invoking the Balog--Szemer\'edi--Gowers theorem. However, the proof of Theorem \ref{thm:main-theorem} does not use this result, and works with energy directly.

As we have stated above, a large part of the interest in Theorem \ref{thm:main-theorem} from our perspective is contained in some of the techniques that are used in the proof, which appear to be novel and not terribly closely related to the existing literature in this area.\footnote{We have since been informed [Ben Green, personal communication] of previous unpublished work of Gowers which includes some closely related ideas.}  In a sense, Theorem \ref{thm:main-theorem} could be thought of as an application of these techniques rather than the primary motivation behind them.

In particular, the tools used are essentially disjoint from those appearing in Sanders' work \cite{sanders}.  Also, we spend most of our time working over the group $\FF_2^n$, which is unconventional given the result itself concerns subsets of $\ZZ$.

Although it is not our real goal, we state one application of our main result.  By combining Theorem \ref{thm:main-theorem} with the argument in \cite[Chapter 5.6]{tao-vu} used as a black box, we obtain the following version of Theorem \ref{thm:fr} as an easy consequence.

\begin{corollary}
  In Theorem \ref{thm:fr}, we may take $C_1(K) = O(\log K)$ and both $C_2(K), C_3(K)$ to be $O(\exp(\exp(O(\log^2 K))))$.
\end{corollary}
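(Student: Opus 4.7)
The plan is to combine Theorem \ref{thm:main-theorem} (applied via the trivial inequality $E(A) \ge |A|^4/|A+A|$) with Chang's proof of \freiman{}'s theorem, exactly as presented in \cite[Chapter 5.6]{tao-vu}, whose key feature is that the rank of the resulting GAP is controlled by the \freiman{} dimension of the input set. The passage from doubling to energy is the first step, the application of the main theorem is the second, and the third step simply controls the doubling of the subset obtained. The fourth step feeds this subset into the black-box Chang/Tao--Vu argument.

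More concretely: given $A \subseteq \ZZ$ with $|A + A| \le K |A|$, Cauchy--Schwarz gives $E(A) \ge |A|^4/|A+A| \ge |A|^3/K$, so Theorem \ref{thm:main-theorem} produces $X \subseteq A$ with $\dim(X) = O(\log K)$ and $|X| \ge |A|/\exp(O(\log^2 K))$. Since $X \subseteq A$, we have
\[
  |X + X| \le |A + A| \le K|A| \le K \exp(O(\log^2 K))\,|X| = \exp(O(\log^2 K))\,|X|,
\]
so the doubling constant of $X$ is $K' = \exp(O(\log^2 K))$.

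Now I would invoke the version of \freiman{}'s theorem proved in \cite[Chapter 5.6]{tao-vu} (following Chang): there is a GAP $P$ of rank \emph{exactly} $\dim(X)$ and size at most $\exp((K')^{O(1)})\,|X|$ containing $X$. Plugging in $K' = \exp(O(\log^2 K))$, this yields rank $O(\log K)$ and $|P| \le \exp(\exp(O(\log^2 K)))\,|A|$, while $|A \cap P| \ge |X| \ge |A|/\exp(O(\log^2 K))$. All three of the desired bounds $C_1(K) = O(\log K)$ and $C_2(K),C_3(K) = \exp(\exp(O(\log^2 K)))$ follow immediately.

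There is really no obstacle here beyond citing the right form of the black-box Chang/Tao--Vu theorem; the only thing to verify is that the version in \cite[Chapter 5.6]{tao-vu} genuinely outputs a progression whose rank is bounded by the \freiman{} dimension of the input (rather than by its doubling), which is indeed the content of Chang's refinement and is noted in the discussion preceding Conjecture~\ref{conj:main}. Everything else is bookkeeping: tracking how the size and capture bounds of Chang's progression transform under the doubling $K' = \exp(O(\log^2 K))$ of $X$, and combining this with the $\exp(O(\log^2 K))$ loss incurred when passing from $A$ to $X$.
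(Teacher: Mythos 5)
Your proof is correct and is precisely the argument the paper intends but leaves implicit. The key steps --- passing from doubling to energy via Cauchy--Schwarz, applying Theorem \ref{thm:main-theorem} to extract $X$, observing that $X$ inherits doubling $K' = \exp(O(\log^2 K))$ since $X + X \subseteq A + A$, and then invoking the Chang/Tao--Vu black box (whose rank is controlled by $\dim(X)$, not by $K'$) --- are exactly the route indicated by the paper's one-line derivation.
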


The bound on $C_2, C_3$ is not competitive compared to e.g.~\cite{green-tao-freiman-bilu} (although there is perhaps scope for optimizing this argument); we mention this corollary only because our derivation differs substantially from those arguments.

\subsection{Outline of the proof}

Our general strategy is one of \emph{additive energy increment}.  That is, we argue that if $\dim(A)$ is substantially larger than $\log K$, we can find a large subset $A_1 \subseteq A$ whose (normalized) additive energy is noticeably larger than that of $A$.  Since normalized energy (which we have not yet defined) is bounded above by $1$, this process must terminate after relatively few steps.

To achieve this, we follow the general philosophy of \cite{green-tao-freiman-bilu} or \cite{fernando}, which both argue along the following lines.  We may assume that, after a \freiman{} isomorphism, we have identified $A$ with a subset of $\ZZ^d$.
\begin{itemize}
  \item If $A$ were in fact a measurable subset of $\RR^d$, it would have to have doubling exponential in $d$ (by the Brunn--Minkowski inequality) or additive energy exponentially small in $d$ (by the sharp form of Young's inequality due to Beckner \cite{beckner} and Brascamp and Lieb \cite{brascamp-lieb}).
  \item Since it does not, we deduce that $A$ is not properly $d$-dimensional in some sense.
  \item We use this information to make progress by passing to a large subset of $A$.
\end{itemize}

In place of the Brunn--Minkowski or sharp Young inequalities, at the heart of our argument is a variant of the sharp Young inequality for ``$U^3$ energy'' (which we will define).  This has the advantage of having a dramatically simpler proof than the sharp Young inequality itself.  We can then transfer this argument from $\RR^d$ to $\ZZ^d$, and while the same conclusion cannot always hold in that setting, if it fails, the simplicity of the original proof translates into very strong structural information about our set.

Specifically, we will be able to show that if $A \subseteq \ZZ^d$ has normalized $U^3$ energy much bigger than $2^{-d}$, then the set must be very unevenly distributed modulo $2$.

We then seek to use this knowledge to obtain an energy increment.  The analysis required for this takes place largely over the finite vector space $\FF_2^d$.  This rather unexpected feature suggests the intriguing possibility that strong conjectures over $\FF_2^n$ (in the spirit of the Polynomial Fre\u{\i}man--Ruzsa conjecture over $\FF_2^n$) could imply strong results over the integers, and specifically further strengthenings of Theorem \ref{thm:main-theorem}.

\subsection{Layout of the paper}

In Section \ref{sec:reduction-mod-2}, we introduce the notion of $U^3$ energy, and explore the variant of the sharp Young inequality described above. We also prove a result stating that if $A \subseteq \ZZ^d$ fails to have small energy then it has a very uneven distribution modulo $2$.

In Section \ref{sec:energy-increment}, we deal with the additive energy increment argument and complete the proof of Theorem \ref{thm:main-theorem}.

At some point, we will need a useful inequality relating $U^3$ energy to usual additive energy, due to Shkredov \cite[Proposition 35]{shkredov}.  For interest and completeness, we give a self-contained alternative proof using entropy in Appendix \ref{sec:u3-inequality}.  This discussion is more or less orthogonal to the rest of the paper.

Finally, in Appendix \ref{app:doubling}, we describe -- for general interest only -- a closely related result to the material of Section \ref{sec:reduction-mod-2}, showing that sets of small doubling in $\ZZ^d$ occupy few residue classes modulo $2$.  Although we do not make use of this result in the main argument, we include it mainly because its proof is very different to the methods of Section \ref{sec:reduction-mod-2}, and it is curious that such dissimilar arguments should obtain such similar and yet -- to the author at least -- unfamiliar conclusions. 

\subsection{Notation}

We use the standard conventions $X = O(Y)$ to mean $X \le C Y$ for some absolute constant $C$, and $X \ll Y$ to mean $X = O(Y)$.

We have frequently adopted the slightly non-standard habit of using integral signs to denote sums or averages over discrete or even finite groups.  Hence for instance given a function $f \colon \ZZ/N\ZZ \to \RR$ we might write
\[
  \|f\|_2 = \left( \int_{x} |f(x)|^2 \right)^{1/2}
\]
even though the domain is finite.  This allows us to be agnostic in our notation concerning the choice of normalization of the measure (e.g.~counting measure or probability measure); i.e.~the above could denote either a sum or an average, depending on context.  Often, any \emph{consistent} choice will be valid, in that we frequently manipulate quantities or equalities that are unaffected by the choice of normalization. 

On a few occasions, though, it will be genuinely important that counting measure is used on the groups $\ZZ^d$ and $\FF_2^d$, and that $L^p$- and $U^k$-norms on these groups are defined accordingly.  We therefore adopt the convention that counting measure is used throughout for these groups, and will flag up when this actually matters.

Various symbols, notably $A'$, $\eta$ and $K$, will retain their meanings throughout large parts of the argument.  The original definitions can be found in the discussion at the start of Section \ref{sec:energy-increment} (but before Section \ref{subsec:casea}).

For a set $X$, we will sometimes denote its indicator function also by $X$ rather than $1_X$, to aid legibility.

\section{Reduction modulo $2$ in $\ZZ^d$ and $U^3$ energy}
\label{sec:reduction-mod-2}

For reasons that are not entirely clear, our arguments will necessarily be concerned not with the usual notion of additive energy
\[
  E(A) = \# \{ x,y,z,w \in A \colon x - y = z - w \} = \|1_A\|_{U^2}^4
\]
but with the much less standard notion of \emph{$U^3$ energy}, which we denote
\[
  E_{U^3}(A) = \|1_A\|_{U^3}^8
\]
where in both cases we assume counting measure in the definition of our $U^k$-norms.  Concretely we therefore have
\begin{align*}
  E_{U^3}(A) = \sum_{x,h_1,h_2,h_3 \in \ZZ} A(x) &A(x+h_1) A(x+h_2) A(x+h_1+h_2) A(x+h_3) \\& A(x+h_1+h_3) A(x+h_2+h_3) A(x+h_1+h_2+h_3) \ . 
\end{align*}
As usual, there is a multilinear variant of the $U^3$ energy, which we denote by
\begin{align*}
  \llangle A_1, \dots, A_8 \rrangle_{U^3} = \sum_{x,h_1,h_2,h_3 \in \ZZ} A_1(x) &A_2(x+h_1) A_3(x+h_2) A_4(x+h_1+h_2) A_5(x+h_3) \\& A_6(x+h_1+h_3) A_7(x+h_2+h_3) A_8(x+h_1+h_2+h_3)
\end{align*}
for sets $A_1, \dots, A_8$, and which is the same as the $U^3$ Gowers inner product.

The reader may reasonably be concerned that the $U^3$-norm of a set might be a genuinely different quantity than its energy, or $U^2$-norm: certainly these are radically different notions when applied to complex-valued functions.  However, it turns out that in the setting of indicator functions, or more generally non-negative real functions, these are comparable measures of additive structure.

More precisely, we state the following.

\begin{proposition}
  \label{prop:u3-energy-inequalities}
  If $Z$ is any abelian group and $f \colon Z \to \RR_{\ge 0}$ a non-negative finitely supported function, then
  \[
    \| f \|_{U^3} \ge \| f \|_{U^2}^2 / \|f\|_1 \ .
  \]
  In the other direction,
  \[
    \| f \|_{U^3} \le \| f \|_{U^2}^{1/2} \| f\|_4^{1/2} \ .
  \]
  We remark that these inequalities are independent of the choice of measure on $Z$ used to define these norms.
\end{proposition}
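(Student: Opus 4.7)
The plan is to handle the two inequalities separately, as they rely on quite different arguments.

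The upper bound $\|f\|_{U^3} \leq \|f\|_{U^2}^{1/2} \|f\|_4^{1/2}$ follows from a single Cauchy--Schwarz. Starting from the decomposition $\|f\|_{U^3}^8 = \sum_h \|\Delta_h f\|_{U^2}^4$, where $\Delta_h f(x) = f(x) f(x+h)$, note that for any non-negative $g$ the autocorrelation $r_g(h') = \sum_x g(x) g(x+h')$ satisfies $r_g(h') \leq r_g(0) = \|g\|_2^2$ by Cauchy--Schwarz, hence
\[
  \|g\|_{U^2}^4 = \sum_{h'} r_g(h')^2 \leq \|g\|_2^2 \sum_{h'} r_g(h') = \|g\|_2^2 \|g\|_1^2.
\]
Applied with $g = \Delta_h f$---whose $L^1$-norm is $(f \ast \tilde f)(h)$ and whose $L^2$-norm squared $\sum_x f(x)^2 f(x+h)^2$ is bounded by $\|f\|_4^4$---this yields $\|\Delta_h f\|_{U^2}^4 \leq \|f\|_4^4 \cdot (f \ast \tilde f)(h)^2$. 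Summing in $h$ gives $\|f\|_{U^3}^8 \leq \|f\|_4^4 \|f\|_{U^2}^4$, which is the claim after taking eighth roots.

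The lower bound $\|f\|_{U^3} \geq \|f\|_{U^2}^2 / \|f\|_1$ is substantially more delicate; it is (the content of) Shkredov's Proposition~35. For $f = 1_A$ it reads $E_{U^3}(A) \, |A|^8 \geq E(A)^4$. The most direct Cauchy--Schwarz, writing $E(A) = \sum_{h_1, h_2} S(h_1, h_2)$ with $S(h_1, h_2) = \sum_x f(x) f(x+h_1) f(x+h_2) f(x+h_1+h_2)$ and estimating the support of $S$ by $|A - A|^2 \leq |A|^4$, produces only the weaker inequality $E_{U^3}(A) \geq E(A)^2 / |A|^4$. This is off from the target by a factor $E(A)^2/|A|^4 \geq 1$, and a single Cauchy--Schwarz step does not appear to close the gap.

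For this direction I would invoke the entropy argument sketched in Appendix~A (the self-contained proof of Shkredov's inequality). One natural setup is to identify cubes in $A$ with $4$-tuples $(Y_0, Y_1, Y_2, Y_3) \in A^4$ for which four explicit affine combinations $c_i$ (three of the form $Y_a + Y_b - Y_c$, the fourth being $Y_1 + Y_2 + Y_3 - 2 Y_0$) simultaneously lie in $A$. What then has to be proved is a correlation estimate for these four events under the uniform measure on $A^4$, from which the missing factor of $(E(A)/|A|^2)^2$ emerges. I expect this correlation / entropy step to be the main obstacle; by comparison the upper bound and the naive Cauchy--Schwarz lower bound are both routine.
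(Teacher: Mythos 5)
Your proof of the second (upper) inequality is correct, though it takes a slightly different route from the paper's. The paper simply writes out $\|f\|_{U^3}^8$ with its eight factors (four at $x$, four at $y = x+h_3$) and applies the four-term AM--GM bound $abcd \le (a^4+b^4+c^4+d^4)/4$ pointwise to the four $y$-factors, giving $\|f\|_{U^3}^8 \le \|f\|_{U^2}^4 \|f\|_4^4$ in a single step. Your route via the inductive identity $\|f\|_{U^3}^8 = \sum_h \|\Delta_h f\|_{U^2}^4$ and two Cauchy--Schwarz applications on $\Delta_h f$ is also valid (the non-negativity of $\Delta_h f$ is needed for the step $r_g(h')^2 \le r_g(0)\,r_g(h')$); it is a little longer but reaches the same endpoint.

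The genuine gap is in the first (lower) inequality. You correctly observe that the naive support-counting Cauchy--Schwarz gives only $E_{U^3}(A) \ge E(A)^2/|A|^4$, short of the target $E(A)^4/|A|^8$ by a factor $E(A)^2/|A|^4 \ge 1$, and you correctly attribute the bound to Shkredov. But your proposed setup --- parametrizing cubes by a vertex $4$-tuple $(Y_0,Y_1,Y_2,Y_3)$ and seeking a ``correlation estimate for four events'' --- is not carried out and does not match the structure of the paper's actual argument. What the paper does (in Appendix~\ref{sec:u3-inequality}) is parametrize parallelepipeds by the \emph{pair of sides} $(a,b) \in Z^2$ via $R(a,b) = \sum_x f(x)f(x+a)f(x+b)f(x+a+b)$, and exploit three identities: $\sum_{a,b} R = \|f\|_{U^2}^4$, $\sum_{a,b} R^2 = \|f\|_{U^3}^8$, and --- crucially --- the marginal factorization $\sum_b R(a,b) = \big(\sum_x f(x)f(x+a)\big)^2$, which yields $\sum_a \big(\sum_b R(a,b)\big)^{1/2} = \|f\|_1^2$. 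These reduce the claim to a purely analytic inequality for an \emph{arbitrary} non-negative function $R$ on $Z \times Z$ (Lemma~\ref{key-lemma}), proved by normalizing $R$ to a joint distribution $W$ of random variables $(X,Y)$ and combining subadditivity $H(X,Y) \le H(X)+H(Y)$ with the R\'enyi comparisons $H(X) \le 2\log\sum_a W_X(a)^{1/2}$ and $H(X,Y) \ge -\log\sum_{a,b}W(a,b)^2$. The reduction to that clean two-variable lemma, driven by the $\|f\|_1^2$ marginal identity, is the ingredient missing from your sketch.
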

\begin{proof}
  The second inequality is straightforward: in the expression
  \[
    \|f\|_{U^3}^8 = \int_{x,y,h_1,h_2} f(x) f(x+h_1) f(x+h_2) f(x+h_1+h_2) f(y) f(y+h_1) f(y+h_2) f(y+h_1+h_2)
  \]
  we apply the AM--GM inequality $a b c d \le (a^4 + b^4 + c^4 + d^4)/4$ pointwise to the last four terms.

  The first is significantly less trivial.  In the case that $f$ is the indicator function of a set (which is all we need, and from which the general case could be deduced), this result is due to Shkredov \cite[Proposition 35]{shkredov}; his proof uses a Loomis--Whitney-type result attributed to Bollob\'as and Thomason \cite{bt95}, and the tensor power trick.

  In Appendix \ref{sec:u3-inequality}, for interest, we give a self-contained alternative (but morally similar) proof using entropy.
\end{proof}

We recall that Young's inequality implies the bound $\|f\|_{U^2} \le \|f\|_{4/3}$.  We begin by recalling a version of this for the $U^3$-norm.

\begin{proposition}
  Let $N$ be an odd prime and suppose $f \colon \ZZ/N\ZZ \to \CC$.  We have $\|f\|_{U^3} \le \|f\|_2$.
\end{proposition}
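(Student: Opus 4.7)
The plan is to deduce the inequality from two applications of Young's convolution inequality; the first of these is already encoded in the stated bound $\|\cdot\|_{U^2} \le \|\cdot\|_{4/3}$. The starting point is the elementary ``cube-shift'' identity
\[
\|f\|_{U^3}^8 = \sum_{h \in \ZZ/N\ZZ} \|\Delta_h f\|_{U^2}^4, \qquad \Delta_h f(x) := f(x)\,\overline{f(x+h)},
\]
obtained by isolating one of the three shift variables (say $h_3 = h$) in the defining sum for $\|f\|_{U^3}^8$: the eight factors group into four pairs, and each pair is exactly a factor in $\|\Delta_h f\|_{U^2}^4$ viewed as a $U^2$-sum in the remaining variables $x, h_1, h_2$. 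This is a purely formal reindexing.

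Applying the $U^2$-to-$L^{4/3}$ inequality termwise and taking fourth powers gives
\[
\|f\|_{U^3}^8 \;\le\; \sum_{h} \|\Delta_h f\|_{4/3}^4 \;=\; \sum_h \left(\sum_x |f(x)|^{4/3}\,|f(x+h)|^{4/3}\right)^3.
\]
Setting $g(x) := |f(x)|^{4/3}$, the inner sum is the autocorrelation $(g \star g)(h) := \sum_x g(x) g(x+h)$, so the right-hand side is $\|g \star g\|_3^3$.

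A second application of Young, with exponents $3/2,\,3/2,\,3$ (valid since $2 \cdot \tfrac{2}{3} = \tfrac{4}{3} = 1 + \tfrac{1}{3}$), yields $\|g \star g\|_3 \le \|g\|_{3/2}^2$. Because $\|g\|_{3/2}^{3/2} = \sum_x |f(x)|^2 = \|f\|_2^2$, we have $\|g\|_{3/2}^6 = \|f\|_2^8$, whence $\|f\|_{U^3}^8 \le \|f\|_2^8$ as desired.

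One remark: the odd-prime hypothesis on $N$ does not appear essential for this particular route, as both the cube-shift identity and the two invocations of Young's inequality are valid on any finite abelian group with counting measure. The one point requiring genuine care is keeping the counting-measure normalizations consistent when passing between the $U^k$-, $L^p$-, and convolution norms; this is where bookkeeping errors can creep in, but I do not expect a substantive obstacle.
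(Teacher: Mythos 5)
Your proof is correct, and it is the ``usual proof'' that the paper explicitly mentions (and declines to use) in the sentence immediately preceding its own argument: namely iterated application of Young's convolution inequality, passing through the recursive formula $\|f\|_{U^3}^8 = \sum_h \|\Delta_h f\|_{U^2}^4$. I checked the exponent bookkeeping and it is all sound: $\|\Delta_h f\|_{4/3}^4 = \big(\sum_x g(x) g(x+h)\big)^3$ with $g = |f|^{4/3}$, then $\|g \star g\|_3 \le \|g\|_{3/2}^2$ and $\|g\|_{3/2}^6 = \|f\|_2^8$.

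The paper instead gives a two-step argument using only the Cauchy--Schwarz inequality: it bounds $\|f\|_{U^3}^8$ by $\int_{x,h_1,h_2,h_3} |f(x)|^2 |f(x+h_1+h_2)|^2 |f(x+h_1+h_3)|^2 |f(x+h_2+h_3)|^2$, then applies the change of variables $(r,s,t,u) = (x,\, x+h_1+h_2,\, x+h_1+h_3,\, x+h_2+h_3)$, whose matrix has determinant $-2$, to identify this as $\|f\|_2^8$. Your remark about the odd-prime hypothesis is exactly right, and it explains what each approach buys. Your route is more general (any finite abelian group with counting measure) because Young's inequality has no such restriction. The paper's route genuinely uses $N$ odd, since the change of variables is a bijection on $(\ZZ/N\ZZ)^4$ only when $2$ is invertible. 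The reason the paper prefers the less general proof is structural: the whole point of the section is to re-run that same elementary Cauchy--Schwarz plus change-of-variables argument over $\ZZ^d$, where the determinant computation becomes the source of the factor $2^{-d}$ and, in the integer setting, of the constraint $r+s+t+u \equiv 0 \pmod 2$ (Proposition \ref{prop:zd-bl-inequality}). That transfer is immediate from the paper's proof and not at all visible from the Young's-inequality route, which is why the paper singles out the Cauchy--Schwarz argument as ``even easier'' and worth recording.
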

This is a special case of a general result for all $U^k$-norms; see \cite{tao-eisner}.  The usual proof is iterated application of Young's inequality for convolutions.  We note that in this case there is an even easier proof.  
\begin{proof}
  We consider
  \begin{align*}
    \|f\|_{U^3}^8 = \int_{x,h_1,h_2,h_3} & \left( f(x) f(x+h_1+h_2) f(x+h_1+h_3) f(x+h_2+h_3) \right) \\
    &\ \  \left( \overline{f(x+h_1) f(x+h_2) f(x+h_3) f(x+h_1+h_2+h_3)} \right)
  \end{align*}
  and apply the Cauchy--Schwarz inequality to obtain
  \begin{align*}
    \|f\|_{U^3}^8 \le &\left(\int_{x,h_1,h_2,h_3} |f(x)|^2 |f(x+h_1+h_2)|^2 |f(x+h_1+h_3)|^2 |f(x+h_2+h_3)|^2\right)^{1/2} \\
                      &\left(\int_{x,h_1,h_2,h_3} |f(x+h_1+h_2+h_3)|^2 |f(x+h_1)|^2 |f(x+h_2)|^2 |f(x+h_3)|^2\right)^{1/2} \\
                      &= \int_{x,h_1,h_2,h_3} |f(x)|^2 |f(x+h_1+h_2)|^2 |f(x+h_1+h_3)|^2 |f(x+h_2+h_3)|^2
  \end{align*}
  where we have used the change of variables
  \begin{align}
    \label{eqn:easy-cov}
    \begin{split}
    x' &= x + h_1 + h_2 + h_3 \\
    h_1' &= -h_1 \\
    h_2' &= -h_2 \\
    h_3' &= -h_3
    \end{split}
  \end{align}
  to show equality in the second step.  But by a further change of variables:
  \begin{align}
    \label{eqn:cov}
    \begin{split}
    r &= x \\
    s &= x+h_1+h_2 \\
    t &= x+h_1+h_3 \\
    u &= x+h_2+h_3
    \end{split}
  \end{align}
  this integral is precisely
  \[
    \left(\int_y |f(y)|^2 \right)^4
  \]
  as required.
\end{proof}

Over the domain $\RR^d$ rather than $\ZZ/N\ZZ$, there is a sharp form of Young's inequality due to Beckner \cite{beckner} and Brascamp--Lieb \cite{brascamp-lieb}, which in particular implies that $\|f\|_{U^2} \le C^d \|f\|_{4/3}$ for some explicit constant $C < 1$.  Again, we consider an analogue for the $U^3$-norm.

\begin{proposition}
  Let $f \colon \RR^d \to \CC$ be a well-behaved function \uppar{say, continuous and compactly supported}.  We have $\|f\|_{U^3} \le (1/2)^{d/8} \|f\|_2$.
\end{proposition}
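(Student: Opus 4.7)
The plan is to mimic the proof of the previous proposition (for $\ZZ/N\ZZ$) essentially verbatim, but to carefully track the Jacobian of the key change of variables, which was trivial over $\ZZ/N\ZZ$ with $N$ an odd prime but produces a factor of $2^d$ in $\RR^d$.

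Concretely, I would expand $\|f\|_{U^3}^8$ as an integral over $(x, h_1, h_2, h_3) \in (\RR^d)^4$ and apply Cauchy--Schwarz exactly as before, grouping the four factors $f(x), f(x+h_1+h_2), f(x+h_1+h_3), f(x+h_2+h_3)$ on one side and their complex-conjugated complements on the other. The easy change of variables \eqref{eqn:easy-cov} is volume-preserving on $(\RR^d)^4$ (each coordinate is a linear map of determinant $\pm 1$), so it still identifies the two resulting factors, yielding
\[
  \|f\|_{U^3}^8 \le \int_{x, h_1, h_2, h_3 \in \RR^d} |f(x)|^2 |f(x+h_1+h_2)|^2 |f(x+h_1+h_3)|^2 |f(x+h_2+h_3)|^2 .
\]

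Then I would apply the change of variables \eqref{eqn:cov}, $(x, h_1, h_2, h_3) \mapsto (r, s, t, u)$. This is a linear map that acts block-diagonally on $(\RR^d)^4$, performing the same $\RR^4 \to \RR^4$ map on each of the $d$ coordinate directions; a direct computation of the determinant of
\[
  \begin{pmatrix} 1 & 0 & 0 & 0 \\ 1 & 1 & 1 & 0 \\ 1 & 1 & 0 & 1 \\ 1 & 0 & 1 & 1 \end{pmatrix}
\]
gives $-2$, so the Jacobian determinant on $(\RR^d)^4$ is $(-2)^d$. Hence $dx\,dh_1\,dh_2\,dh_3 = 2^{-d}\, dr\,ds\,dt\,du$, and the integral transforms into $2^{-d} \|f\|_2^8$. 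Taking eighth roots yields $\|f\|_{U^3} \le 2^{-d/8} \|f\|_2$, as required.

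There is no real obstacle: the only substantive difference from the $\ZZ/N\ZZ$ argument is the appearance of the $2^d$ Jacobian factor, and this is precisely what drives the gain over $\RR^d$. (Indeed, this is a reassuring sanity check: iterating the inequality for $\ZZ/N\ZZ$ gives no gain in dimension, reflecting that the analogous change of variables on $(\ZZ/N\ZZ)^{4d}$ is bijective when $N$ is odd, whereas in $\RR^d$ the same map is a $2^d$-to-$1$ covering of its image up to Jacobian scaling.)
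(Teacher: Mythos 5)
Your proposal matches the paper's proof essentially verbatim: both apply the same Cauchy--Schwarz argument, observe that the change of variables \eqref{eqn:easy-cov} is volume-preserving, and then compute the Jacobian determinant $(-2)^d$ for the change of variables \eqref{eqn:cov} to obtain the factor of $2^{-d}$. The argument is correct and complete.
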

As before, this can be deduced by iterated application of the \emph{sharp} Young inequality -- again see \cite{tao-eisner} for such a proof.  Alternatively, it is a special case of the more general Brascamp--Lieb inequality \cite{brascamp-lieb}.

However, as remarked in their original paper \cite{brascamp-lieb}, some special cases of the Brascamp--Lieb inequality have unusually easy proofs.  It turns out this is one such instance, and that we have essentially already seen the proof.
\begin{proof}
  We apply the Cauchy--Schwarz argument as above to deduce that
  \[
    \|f\|_{U^3}^8 \le \int_{x,h_1,h_2,h_3 \in \RR^d} |f(x)|^2 |f(x+h_1+h_2)|^2 |f(x+h_1+h_3)|^2 |f(x+h_2+h_3)|^2 \ ,
  \]
  noting that the matrix associated to the change of variables \eqref{eqn:easy-cov} has determinant $-1$.
  When we apply the linear change of variables \eqref{eqn:cov}, however,
  we find that the corresponding matrix
  \begin{equation}
    \label{eq:cov-matrix}
    \begin{pmatrix} 1 & 0 & 0 & 0 \\ 1 & 1 & 1 & 0 \\ 1 & 1 & 0 & 1 \\ 1 & 0 & 1 & 1 \end{pmatrix}^{\otimes d}
  \end{equation}
  has determinant $(-2)^d$.  It follows that
  \[
    \|f\|_{U^3}^8 \le 2^{-d} \|f\|_2^8
  \]
  as required.
\end{proof}

Of course, we are ultimately interested not in functions on $\RR^d$ but in subsets of $\ZZ^d$.  Since the argument above is so elementary, it is straightforward to transfer it to this setting and see what we obtain.

\begin{proposition}
  \label{prop:zd-bl-inequality}
  If $f \colon \ZZ^d \to \CC$ then
  \[
    \|f\|_{U^3}^8 \le \int_{r,s,t,u} [r + s + t + u \equiv 0\ (\bmod\,2)]\  |f(r)|^2 |f(s)|^2 |f(t)|^2 |f(u)|^2 \ .
  \]
\end{proposition}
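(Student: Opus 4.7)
The plan is to run exactly the same two-step argument (Cauchy--Schwarz, then change of variables) as in the preceding proof for $\RR^d$, but now keeping careful track of what the change of variables does over $\ZZ^d$. The only real issue is that the substitution \eqref{eqn:cov} no longer gives a bijection on the whole lattice, so the upper bound picks up a parity restriction.

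Concretely, I would first apply the Cauchy--Schwarz step (identical to the proof of the previous two propositions, using the change of variables \eqref{eqn:easy-cov}, which is a bijection on $\ZZ^d$ since its matrix has determinant $-1$) to get
\[
  \|f\|_{U^3}^8 \le \int_{x,h_1,h_2,h_3 \in \ZZ^d} |f(x)|^2 |f(x+h_1+h_2)|^2 |f(x+h_1+h_3)|^2 |f(x+h_2+h_3)|^2 \ .
\]
Now I would analyze the map $\Phi \colon (\ZZ^d)^4 \to (\ZZ^d)^4$ given by \eqref{eqn:cov}, i.e.\ $(x,h_1,h_2,h_3) \mapsto (r,s,t,u)$. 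The observation is that $r+s+t+u = 4x + 2(h_1+h_2+h_3) \in 2\ZZ^d$, so the image of $\Phi$ lies in the index-$2^d$ sublattice $L = \{(r,s,t,u) : r+s+t+u \in 2\ZZ^d\}$.

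The main verification is that $\Phi$ is in fact a bijection from $(\ZZ^d)^4$ onto $L$. Inverting the system directly: from $r=x$ and the three relations $s-r = h_1+h_2$, $t-r = h_1+h_3$, $u-r = h_2+h_3$, adding the latter three gives $h_1+h_2+h_3 = (s+t+u-3r)/2$, and then
\[
  h_1 = \tfrac{1}{2}(s+t-u-r), \qquad h_2 = \tfrac{1}{2}(s-t+u-r), \qquad h_3 = \tfrac{1}{2}(-s+t+u-r) \ .
\]
Each of these three vectors lies in $\ZZ^d$ iff its entries are integral, and each differs from $r+s+t+u$ by an element of $2\ZZ^d$; hence all three are integral precisely when $(r,s,t,u) \in L$, giving a unique preimage in that case. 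This establishes the bijection.

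Changing variables accordingly (this is a pointwise rearrangement, with no Jacobian factor because we are using counting measure), the right-hand side of the Cauchy--Schwarz bound becomes exactly
\[
  \int_{r,s,t,u \in \ZZ^d} [r+s+t+u \equiv 0 \,(\bmod\,2)] \, |f(r)|^2 |f(s)|^2 |f(t)|^2 |f(u)|^2 \ ,
\]
which is the claimed inequality. There is no essential obstacle, since all the algebra mirrors the $\RR^d$ proof; the only item requiring care is the explicit description of the image of $\Phi$, which is exactly the parity condition that appears in the statement.
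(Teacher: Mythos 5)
Your proof is correct and takes essentially the same route as the paper: apply the Cauchy--Schwarz step, change variables via \eqref{eqn:cov}, and observe that the image of the lattice map is exactly the sublattice $\{(r,s,t,u) : r+s+t+u \in 2\ZZ^d\}$. The only (minor) difference is how you verify this last fact — you invert the linear system explicitly and check integrality of $h_1,h_2,h_3$, whereas the paper notes one inclusion is obvious and then appeals to the determinant computation ($\det = (-2)^d$, hence index $2^d$) to force equality; both verifications are valid and of comparable length.
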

\begin{proof}
  We apply the same Cauchy--Schwarz argument as above, and again make the change of variables \eqref{eqn:cov}. This time, we observe that the image of the matrix \eqref{eq:cov-matrix} applied to $\ZZ^{4d}$ consists of precisely those tuples $(r,s,t,u) \in \ZZ^{4d}$ such that $r + s + t + u \in 2 \ZZ^d$, as required.  Indeed, one inclusion is clear, and equality of these two sets then follows from the determinant calculation above.
\end{proof}

The right hand side expression is a function only of what we shall define as
\begin{align*}
  \eta \colon (\ZZ/2\ZZ)^d &\to \RR_{\ge 0} \\
  \omega &\mapsto \left(\int_x [x \bmod 2 = \omega]\ |f(x)|^2 \right)^{1/2}
\end{align*}
i.e.~the distribution of the $L^2$ mass of $f$ modulo $2$.  Moreover, we see that if $\eta$ is a constant function, i.e., the mass of the original function $f$ was equidistributed over residue classes modulo $2$, then this expression is precisely $\|f\|_2^8 / 2^d$, just as we saw over $\RR^d$.

In other words, applying this to $f = 1_A$ for some finite set $A \subseteq \ZZ^d$, we deduce the following.
\begin{slogan}
  If $A \subseteq \ZZ^d$ is a finite set and the energy $E_{U^3}(A)$ is much bigger than $2^{-d} |A|^4$, then $A$ must be highly non-equidistributed modulo $2$.
\end{slogan}

\begin{remark}
  One possible interpretation for this rather bizarre-looking statement is that if $A$ were properly $d$-dimensional then its energy should be exponentially small in $d$; so the non-flat distribution modulo $2$ is a kind of witness to the fact that $A$ is not properly $d$-dimensional.
\end{remark}

We now make this observation more precise.

First, we note that -- since positive and negative signs are irrelevant modulo $2$ -- the expression appearing on the right hand side of Proposition \ref{prop:zd-bl-inequality} is precisely $\|\eta^2\|_{U^2}^4$ where $\eta$ is defined as above. Here, we stress the importance of using counting measure on $(\ZZ/2\ZZ)^d$ to define this norm.

Using the Young's inequality bound $\|h\|_{U^2} \le \|h\|_{4/3}$, we obtain the following.

\begin{proposition}
  \label{prop:mod-2-lp-clustering}
  Let $f \colon \ZZ^d \to \CC$ and let $\eta \colon (\ZZ/2\ZZ)^d \to \RR_{\ge 0}$ be defined as above.  Then
  \[
    \|f\|_{U^3} \le \|\eta\|_{8/3} \ .
  \]
\end{proposition}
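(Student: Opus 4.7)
The plan is to chain together three ingredients, all of which are already essentially present in the preceding text.

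First, I would apply Proposition \ref{prop:zd-bl-inequality} directly, which gives
\[
  \|f\|_{U^3}^8 \le \int_{r,s,t,u \in \ZZ^d} [r+s+t+u \equiv 0\ (\bmod\, 2)]\ |f(r)|^2 |f(s)|^2 |f(t)|^2 |f(u)|^2.
\]

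Second, I would verify the identification, flagged in the paragraph immediately before the proposition, between the right-hand side and $\|\eta^2\|_{U^2}^4$ on $(\ZZ/2\ZZ)^d$. Concretely, I would partition each of the variables $r,s,t,u \in \ZZ^d$ according to its residue $\omega_r, \omega_s, \omega_t, \omega_u \in (\ZZ/2\ZZ)^d$. Summing $|f(\cdot)|^2$ over the fibre of a given residue $\omega$ yields exactly $\eta(\omega)^2$ by the definition of $\eta$. The constraint $r+s+t+u \equiv 0 \pmod 2$ becomes $\omega_r + \omega_s + \omega_t + \omega_u = 0$ in $(\ZZ/2\ZZ)^d$, and since we are in characteristic $2$ this is the same as $\omega_r + \omega_s = \omega_t + \omega_u$. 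Reparameterizing by $x, h_1, h_2$ via $\omega_r = x$, $\omega_s = x + h_1$, $\omega_t = x + h_2$, $\omega_u = x + h_1 + h_2$ is a bijection onto this zero-sum locus, so the integral equals $\|\eta^2\|_{U^2}^4$, where the $U^2$-norm uses counting measure on $(\ZZ/2\ZZ)^d$ (this is the place where the normalization convention really matters, as warned in the text).

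Third, I would apply the standard Young's inequality $\|h\|_{U^2} \le \|h\|_{4/3}$ (valid for counting measure on any abelian group, and already quoted in the excerpt) to the non-negative function $h = \eta^2$. This yields
\[
  \|\eta^2\|_{U^2}^4 \le \|\eta^2\|_{4/3}^4 = \left( \sum_{\omega} \eta(\omega)^{8/3}\right)^3 = \|\eta\|_{8/3}^8.
\]
Combining the three displays gives $\|f\|_{U^3}^8 \le \|\eta\|_{8/3}^8$, i.e.\ the claimed inequality.

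There is no serious obstacle: all steps are formal. The only subtle point -- and hence the only thing I would be careful to write out -- is the bookkeeping in the second step, to make sure the measure conventions match up and that the change of variables genuinely produces the $U^2$-norm of $\eta^2$ (rather than, say, a constant multiple thereof arising from a spurious normalization).
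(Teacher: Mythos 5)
Your proof is correct and is essentially the same as the paper's: both apply Proposition~\ref{prop:zd-bl-inequality}, identify the right-hand side as $\|\eta^2\|_{U^2}^4$ on $(\ZZ/2\ZZ)^d$ with counting measure, and finish with Young's inequality $\|\eta^2\|_{U^2} \le \|\eta^2\|_{4/3} = \|\eta\|_{8/3}^2$. You have merely written out the fibre-wise change-of-variables bookkeeping that the paper relegates to the preceding discussion.
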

\begin{proof}
  We know that $\|f\|_{U^3}^8 \le \|\eta^2\|_{U^2}^4$ and $\|\eta^2\|_{U^2} \le \|\eta^2\|_{4/3} = \|\eta\|_{8/3}^2$.  The result follows.
\end{proof}

It follows that if $\|f\|_{U^3} / \|f\|_2 = 1/K$ is large then so is $\|\eta\|_{8/3} / \|\eta\|_2$ (since $\|f\|_2 = \|\eta\|_2$).  Since $8/3$ is bigger than $2$, this tells us that $\eta$ must have a positive proportion of its $L^2$ mass on a just few points of $(\ZZ/N\ZZ)^d$.  Specifically, there is an $x \in (\ZZ/2\ZZ)^d$ such that  $\eta(x) \ge K^{-4} \|\eta\|_2$.

In the case that $f = 1_A$ is the indicator function of a set, this has exactly the same flavour as Proposition \ref{prop:few-values-mod-2}.

\section{Energy increment arguments}
\label{sec:energy-increment}

The key quantity in the remainder of the argument is what we call the \emph{normalized $U^3$ energy} of a set:
\[
  \cE(A) = \left(E_{U^3}(A) / |A|^4\right)^{1/8} = \|A\|_{U^3} / \|A\|_2 \ .
\]
Clearly the same quantity also makes sense for any (non-negative) finitely supported function $f$; i.e.~we define
\[
\cE(f) = \|f\|_{U^3} / \|f\|_2
\]
for any discrete abelian group $Z$ and any finitely supported $f \colon Z \to \RR_{\ge 0}$.  We remark that this quantity does not depend on the normalization of Haar measure on $Z$.

Our detailed strategy to use the results of the previous section to prove Theorem \ref{thm:main-theorem} is as follows.

\begin{itemize}
  \item Let $A \subseteq \ZZ$ be a finite set with large normalized $U^3$ energy, $\cE(A) = 1 / K$.
  \item Write $d = \dim(A)$. If $d = O(\log K)$, we are done.  If not, we proceed as follows.
  \item By the definition of \freiman{} dimension, there is a \freiman{} isomorphic copy $A' \subseteq \ZZ^d$ of $A$, which affinely generates $\ZZ^d$.  Note (see Proposition \ref{prop:u3-freiman}) that \freiman{} isomorphism preserves quantities such as the $U^3$ energy of a set.
  \item We now inspect the function $\eta \colon (\ZZ/2\ZZ)^d \to \RR_{\ge 0}$ defined in the previous section for $f = 1_{A'}$, i.e., the $L^2$ distribution of $A'$ modulo $2$.  By Proposition \ref{prop:mod-2-lp-clustering}, this has a positive proportion of its $L^2$ mass concentrated on only a few values.
  \item We will now use this information to obtain an additive energy increment upon passing to a subset of $A'$.  Specifically, we will find a subset $S \subseteq (\ZZ/2\ZZ)^d$, define
    \[
      A'_S := \{ a \in A' \colon a \bmod 2 \in S \} \subseteq A'
    \]
    and show that
    \[
      \cE(A'_S) \ge \alpha\, \cE(A')
    \]
    and
    \[
      |A'| / |A'_S| \le \beta
    \]
    for suitable parameters $\alpha > 1$ and $\beta$.

    Finally, we pass to the corresponding set $A_S \subseteq A$, where the exact same conclusions hold.  We then iterate this whole argument from the beginning on $A_S$.
\end{itemize}

The aim is of course to make the parameter $\alpha$ as large as possible while keeping $\beta$ fairly small.  We might hope to have $\alpha \ge 1.01$ and $\beta = O(K^{O(1)})$.  In this case, since the normalized energy increases by a factor of $\alpha$ at each stage and is bounded above by $1$, the algorithm can proceed for at most $O(\log K)$ iterations, meaning the size of the final set relative to $A$ is about $K^{-O(\log K)} = \exp(-O(\log^2 K))$ as required.
    
Our procedure for finding this energy increment is slightly involved, and splits into several cases.  We stress that there is no one obvious way to proceed in this phase of the argument, and there may well be more elegant ways to combine the various options than we have managed to find. 
    
The three cases we will consider are as follows.
\begin{enumerate}[label=(\Alph*)]
  \item We know that a positive proportion of the $L^2$ mass of $\eta$ is clustered on a few points where $\eta$ is large.  Suppose that a significant amount of $L^2$ mass can also be found on points where $\eta$ is \emph{small}; currently this cannot be ruled out.

    Then, by discarding these points -- which by Proposition \ref{prop:mod-2-lp-clustering} has very little effect on $\|A'\|_{U^3}$ -- we can obtain a fairly respectable energy increment at very little cost.  In the above notation, we set
    \[
      S = \{ x \in (\ZZ/2\ZZ)^d \colon \eta(x) \text{ is large} \} \ .
    \]

    So, we may henceforth assume that almost all of the $L^2$ mass of $\eta(x)$ is found at points where $\eta(x) \gg K^{-C}$.

  \item The mainstay of the strategy is to restrict to a single fiber modulo $2$.  In the above notation we take $S \subseteq (\ZZ/2\ZZ)^d$ to be a single point.  By the previous case, this involves a density penalty of $O(K^C)$.

    We exploit the fact that the energy of these fibers is, on average (in an appropriate sense), at least the energy of the whole set.  Moreover, it will usually be slightly larger, \emph{unless} the function $\eta$ has normalized energy close to the maximum, i.e., $\cE(\eta) \ge 0.99$.

  \item The final case handles this possibility that $\eta$ has close to maximal energy.
    
    Fortunately, in this case there is a structure theorem, that asserts that $\eta$ is close to being the indicator function of a coset of a subgroup of $(\ZZ/2\ZZ)^d$, correctly normalized.  Since $\eta$ has small support compared to the whole group $(\ZZ/2\ZZ)^d$, this has to be a proper subgroup.

    However, $\eta$ cannot be entirely supported on this coset, since by assumption $A'$ affinely generates $\ZZ^d$.  So, there are a few stray points of $\eta$ lying outside.

    The strategy is then (usually) to delete these extraneous points.  One can argue that this obtains an energy increment: possibly a very modest one, but at a similarly modest cost in density.
\end{enumerate}

In the main case, Case B, we increase the normalized energy by a factor of at least about $1.001$ (say) and lose $O(K^{O(1)})$ in the density. As we remark above, it follows this case can occur only $O(\log K)$ times, and so the density loss is of the form $\exp(O(\log^2 K))$.

The analysis of Cases A and C is rather more delicate: there is no lower bound on the energy increase, and so each might occur an unbounded number of times.  However, in the final evaluation we will see that these cases are in fact very efficient compared to Case B, achieving a polynomial trade-off between energy and density.

\begin{remark}
  The unwelcome $\log^2 K$ comes from the inherently wasteful practice of restricting to precisely \emph{one} fiber modulo $2$ in Case B, which loses a factor of $K^{O(1)}$ density, in exchange for relatively small energy increment.

  One might reasonably conjecture that a better energy increment can always be obtained using the more refined practice of passing to a coset as in Case C.  However, our analysis does not show this.  Moreover, such a conjecture seems likely to be roughly as hard as the Polynomial Fre\u{\i}man--Ruzsa Conjecture over $\FF_2^n$.
  
  As mentioned above, this does leave open the possibility that a sufficiently good understanding of quantitative structural theory over $\FF_2^n$ could imply significantly improved results over $\ZZ$ using these methods.
\end{remark}

The remainder of this section will be concerned with making the above sketch precise, and in particular with the analysis of these three cases.  This argument is rather technical (although rarely actually difficult) and has little conceptual content beyond what we have already said.

\vspace{\baselineskip}

Finally before we proceed, we shall fix some further notation.  As well as
\[
  A'_S = \{ a \in A' \colon a \bmod 2 \in S \}
\]
for $S \subseteq (\ZZ/2\ZZ)^d$, we will write
\[
  A'_\omega = \{ a \in A' \colon a \bmod 2 = \omega \}
\]
for the degenerate case where $S = \{\omega\}$ is a single point $\omega \in (\ZZ/2\ZZ)^d$. So, for instance, $\eta(\omega) = \|A'_{\omega}\|_2$, and we continue to use this symbol in what follows.

As above, the constant $K \ge 1$ will always be defined by $\cE(A') = 1 / K$.

\subsection{Case A: discarding points where $\eta$ is small}
\label{subsec:casea}

Recall we are trying to show that either we can obtain a reasonable energy increment, or we may assume that, up to a negligible error, $\eta$ is supported on large values.

We formalize this as follows.

\begin{proposition}
  \label{prop:discard-small-points}
  We use the notation from above.  Let $0 < R < 1$ be a parameter.  We have the following dichotomy: either $\eta$ can be decomposed as
  \[
    \eta = \eta_{\bg} + \eta_{\sml}
  \]
  where $\eta_{\bg}$, $\eta_{\sml}$ have disjoint supports, $\eta_{\bg}(x) / \|\eta\|_2 \ge 2^{-8} R^4 K^{-4}$ whenever it is non-zero and $\|\eta_{\sml}\|_2 \le R \|\eta\|_2$; or we can find a set $S \subseteq (\ZZ/2\ZZ)^d$ such that
  \[
    \frac{\cE(A'_S)}{\cE(A')} \ge (|A'| / |A'_S|)^{1/4} \ .
  \]
  In the latter case, we may further assume that $A'_S \ne A'$.
\end{proposition}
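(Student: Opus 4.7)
The plan is to exhibit the dichotomy by a straightforward thresholding procedure and then verify an energy--density trade-off via Proposition \ref{prop:mod-2-lp-clustering} and the triangle inequality for the $U^3$-norm.

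First I would set $T = 2^{-8} R^4 K^{-4} \|\eta\|_2$ and define $\eta_{\bg}(x) = \eta(x) [\eta(x) \ge T]$ and $\eta_{\sml}(x) = \eta(x) - \eta_{\bg}(x)$. If $\|\eta_{\sml}\|_2 \le R \|\eta\|_2$, the first branch of the dichotomy is immediate: on its support, $\eta_{\bg}(x) \ge T$, which gives exactly the stated lower bound on $\eta_{\bg}/\|\eta\|_2$. So from now on assume $\|\eta_{\sml}\|_2 > R \|\eta\|_2$; I would set $S = \operatorname{supp}(\eta_{\bg}) = \{\omega : \eta(\omega) \ge T\}$. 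Since $\eta_{\sml}$ is then nontrivial, there is some $\omega \in (\ZZ/2\ZZ)^d$ with $0 < \eta(\omega) < T$, hence $A'_{\omega} \subseteq A' \setminus A'_S$ is nonempty, giving $A'_S \ne A'$.

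The main task is to verify the energy inequality. Unpacking the definition, $\cE(A'_S)/\cE(A') \ge (|A'|/|A'_S|)^{1/4}$ is equivalent to $\|A'_S\|_{U^3}/\|A'\|_{U^3} \ge (\|\eta_{\bg}\|_2/\|\eta\|_2)^{1/2}$, using $\|A'\|_2 = \|\eta\|_2$ and $\|A'_S\|_2 = \|\eta_{\bg}\|_2$. I would assemble three ingredients:
\begin{itemize}
\item The mod-$2$ distribution of $1_{A' \setminus A'_S}$ is exactly $\eta_{\sml}$, so Proposition \ref{prop:mod-2-lp-clustering} yields $\|1_{A' \setminus A'_S}\|_{U^3} \le \|\eta_{\sml}\|_{8/3}$.
\item Since $\eta_{\sml}(\omega) < T$ pointwise, $\|\eta_{\sml}\|_{8/3}^{8/3} \le T^{2/3} \|\eta_{\sml}\|_2^2$, hence $\|\eta_{\sml}\|_{8/3} \le T^{1/4} \|\eta_{\sml}\|_2^{3/4}$.
\item The triangle inequality for the $U^3$-norm gives $\|A'_S\|_{U^3} \ge \|A'\|_{U^3} - \|1_{A' \setminus A'_S}\|_{U^3}$.
\end{itemize}
Combining these with $\|A'\|_{U^3} = \|\eta\|_2/K$ (from $\cE(A') = 1/K$) and the orthogonality identity $\|\eta_{\bg}\|_2^2 + \|\eta_{\sml}\|_2^2 = \|\eta\|_2^2$ reduces the claim to an elementary numerical inequality in $u = \|\eta_{\sml}\|_2/\|\eta\|_2$ and $R$.

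The main obstacle I anticipate is that a naive execution of the above with a fixed threshold produces the bound most tightly at $u = R$, and the exponents force a factor of $R^5$ rather than the claimed $R^4$ in the definition of $T$. To close this gap, I would refine the thresholding: instead of fixing $T$ once and for all, choose $T_* \le T$ adaptively so that $\|\eta \cdot \mathbf{1}_{\eta < T_*}\|_2$ is tuned (by monotonicity of the truncation, one can arrange this to equal $R\|\eta\|_2$ up to a single-point rounding, at the cost of redefining $\eta_{\bg}, \eta_{\sml}$ accordingly). This makes the hard case $u = R$ sharp while preserving the lower bound $\eta_{\bg}(x) \ge T_* \ge $ (a small multiple of $T$) on the support, up to absorbing a constant. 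The remaining verification is then routine substitution into the inequalities listed above.
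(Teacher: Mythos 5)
Your approach matches the paper's: threshold $\eta$, bound the discarded part via Proposition~\ref{prop:mod-2-lp-clustering} and the interpolation $\|\eta_{\sml}\|_{8/3}\le \|\eta_{\sml}\|_\infty^{1/4}\|\eta_{\sml}\|_2^{3/4}$, then use the $U^3$ triangle inequality; your reformulation of the target inequality as $\|A'_S\|_{U^3}/\|A'\|_{U^3}\ge(\|\eta_{\bg}\|_2/\|\eta\|_2)^{1/2}$ is correct and is exactly what the paper's Lemma~\ref{lem:discard-small-points} encodes. Your suspicion about the exponent is also well-founded: the paper's own proof of that lemma passes from $\|\eta_{\sml}\|_\infty^{1/4}\|\eta_{\sml}\|_2^{3/4}$ to $W^{1/4}\|\eta_{\sml}\|_2$, which implicitly uses $\|\eta_{\sml}\|_\infty\le W\|\eta_{\sml}\|_2$, whereas the thresholding only gives $\|\eta_{\sml}\|_\infty\le W\|\eta\|_2$ (and $\|\eta_{\sml}\|_2\le\|\eta\|_2$, so the stated step does not follow). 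With the correct bound the reduction is $1-\tfrac{R}{4}u^{3/4}\ge(1-u^2)^{1/4}$ for $u:=\|\eta_{\sml}\|_2/\|\eta\|_2>R$, and this fails for small $R$ near $u=R$ (e.g.\ $R=u=\tfrac12$ gives $0.926<0.931$). So you have, blind, caught a real (harmless) slip in the paper.

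The adaptive-threshold repair you sketch does not, however, close the gap. Choosing $T_*\le T$ so that $\|\eta\cdot\mathbf 1_{\eta<T_*}\|_2=R\|\eta\|_2$ gives you $u=R$, but then the verification requires $K T_*^{1/4}R^{3/4}\|\eta\|_2^{-1/4}\le 1-(1-R^2)^{1/4}$, i.e.\ $T_*\le 2^{-8}R^5K^{-4}\|\eta\|_2=RT$; the only thing you control is $T_*\le T$, and there is no reason $T_*$ should be bounded below by a constant multiple of $T$ (nor would such a bound help, since what you need is an \emph{upper} bound on $T_*$ by $RT<T$). The case $T_*\in(RT,T]$ is therefore untreated. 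The clean fix is simply to replace $R^4$ by $R^5$ in the statement, i.e.\ take $W=2^{-8}R^5K^{-4}$: then $W^{1/4}Ku^{3/4}=\tfrac14 R^{5/4}u^{3/4}<\tfrac14 u^2$ for $u>R$, and $1-\tfrac14 u^2\ge(1-u^2)^{1/4}$ by the tangent-line inequality. This only changes constants in Propositions~\ref{prop:small-energy} and~\ref{prop:99pc-energy-case} (e.g.\ $2^{24}K^{16}\eps^{-8}$ becomes $2^{26}K^{18}\eps^{-10}$, and the condition $K\le 2^{d/8-4}$ becomes $K\le 2^{d/8-17/4}$) and does not affect the $O(\cdot)$ bounds in Theorem~\ref{thm:main-theorem}.
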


Observe that the energy increment we obtain here is a power of the density loss, which is acceptable and indeed much more efficient than we hope to achieve overall.

We first isolate the following lemma.

\begin{lemma}
  \label{lem:discard-small-points}
  We use all the notation from above.  Let
  \[
    S := \{ x \in (\ZZ/2\ZZ)^d \colon \eta(x) / \|\eta\|_2 \ge W \}
  \]
  for some parameter $W$.  Define $t := |A'_S| / |A'|$.  Then
  \[
    \frac{\cE(A'_S)}{\cE(A')} \ge t^{-1/2} \left(1 - W^{1/4} K \sqrt{1 - t} \right) \ .
  \]
\end{lemma}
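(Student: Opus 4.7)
The plan is to use the triangle inequality for the $U^3$-norm, decomposing $A'$ into $A'_S$ and its complement in $A'$, and then bound the complementary piece using the mod-$2$ clustering result of Proposition \ref{prop:mod-2-lp-clustering}.

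Writing $S^c = (\ZZ/2\ZZ)^d \setminus S$, we have $1_{A'} = 1_{A'_S} + 1_{A'_{S^c}}$, so the triangle inequality gives $\|A'_S\|_{U^3} \ge \|A'\|_{U^3} - \|A'_{S^c}\|_{U^3}$.  Since $\cE(B) = \|B\|_{U^3}/\|B\|_2$ and $\|A'_S\|_2/\|A'\|_2 = t^{1/2}$, this rearranges to
\[
  \frac{\cE(A'_S)}{\cE(A')} \ge t^{-1/2}\left(1 - \frac{\|A'_{S^c}\|_{U^3}}{\|A'\|_{U^3}}\right),
\]
so the task reduces to showing $\|A'_{S^c}\|_{U^3}/\|A'\|_{U^3} \le W^{1/4} K \sqrt{1-t}$.

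Apply Proposition \ref{prop:mod-2-lp-clustering} to $f = 1_{A'_{S^c}}$.  The associated $\eta$-function is $\eta\cdot 1_{S^c}$, giving $\|A'_{S^c}\|_{U^3} \le \|\eta\cdot 1_{S^c}\|_{8/3}$.  Two facts are available: (a) by the definition of $S$, $\|\eta\cdot 1_{S^c}\|_\infty \le W\|\eta\|_2$; and (b) $\|\eta\cdot 1_{S^c}\|_2^2 = |A'_{S^c}| = (1-t)|A'|$.  Combined with the identities $\|\eta\|_2 = |A'|^{1/2}$ and $\|A'\|_{U^3} = \cE(A')\|A'\|_2 = |A'|^{1/2}/K$, a suitable H\"older-type interpolation of $\|g\|_{8/3}$ between $\|g\|_\infty$ and $\|g\|_2$ then yields the required bound on the ratio.

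The main obstacle is obtaining exactly the exponent $(1-t)^{1/2}$ claimed in the statement.  The most direct interpolation $\|g\|_{8/3} \le \|g\|_\infty^{1/4}\|g\|_2^{3/4}$ gives only $(1-t)^{3/8}$, which is strictly weaker, and as one can verify is not enough for the downstream application in Proposition \ref{prop:discard-small-points} when $t$ is close to $1$.  To obtain the sharper exponent one presumably has to work with the intermediate bound $\|f\|_{U^3}^8 \le \|\eta^2\|_{U^2}^4$ from Proposition \ref{prop:zd-bl-inequality} directly and apply a Young-type inequality such as $\|g\ast g\|_2 \le \|g\|_1 \|g\|_2$ to $g = (\eta\cdot 1_{S^c})^2$, carefully splitting the $\|\eta_{S^c}\|_\infty$ information across the factors to squeeze out the extra $(1-t)^{1/8}$ needed.
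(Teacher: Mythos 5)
You have reconstructed exactly the argument the paper gives: decompose $1_{A'} = 1_{A'_S} + 1_{A'_{S^c}}$, apply the triangle inequality for the $U^3$-norm to reduce to bounding $\|A'_{S^c}\|_{U^3}/\|A'\|_{U^3}$, and control this via Proposition~\ref{prop:mod-2-lp-clustering} together with the $L^\infty$--$L^2$ interpolation of the $L^{8/3}$-norm. So up to the final numerical step this is the paper's proof.

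Your observation about the exponent is correct, and it points at a genuine slip in the paper's own proof. The paper's chain reads
\[
  \|\eta_{\sml}\|_{8/3} \le \|\eta_{\sml}\|_\infty^{1/4}\|\eta_{\sml}\|_2^{3/4} \le W^{1/4}\|\eta_{\sml}\|_2 ,
\]
but the last step would require $\|\eta_{\sml}\|_\infty \le W\|\eta_{\sml}\|_2$, whereas the definition of $S$ only gives $\|\eta_{\sml}\|_\infty \le W\|\eta\|_2$, and $\|\eta_{\sml}\|_2 \le \|\eta\|_2$ goes the wrong way. What actually follows is
\[
  \|\eta_{\sml}\|_{8/3} \le W^{1/4}\|\eta\|_2^{1/4}\|\eta_{\sml}\|_2^{3/4} = W^{1/4}(1-t)^{3/8}\|\eta\|_2 ,
\]
i.e.\ the exponent $(1-t)^{3/8}$ rather than $(1-t)^{1/2}$, exactly as you found. (Taking $\eta_{\sml}$ approximately constant on its support shows the interpolation can be tight, so there is no hidden slack to exploit.) Your proposed alternative route via $\|g\ast g\|_2 \le \|g\|_1\|g\|_2$ together with $\|\eta_{\sml}\|_4^4 \le \|\eta_{\sml}\|_\infty^2\|\eta_{\sml}\|_2^2$ reproduces the identical bound $W^{1/4}\|\eta\|_2^{1/4}\|\eta_{\sml}\|_2^{3/4}$, so it does not rescue the claimed exponent either; I don't see how to ``squeeze out'' the extra $(1-t)^{1/8}$ this way, and I suspect it cannot be done purely from the hypotheses $\|\eta_\sml\|_\infty \le W\|\eta\|_2$ and $\|\eta_\sml\|_2 = (1-t)^{1/2}\|\eta\|_2$.

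The right repair is therefore to accept the weaker conclusion $\cE(A'_S)/\cE(A') \ge t^{-1/2}\big(1 - W^{1/4}K(1-t)^{3/8}\big)$ in this lemma and compensate in Proposition~\ref{prop:discard-small-points} by taking $W$ a little smaller, e.g.\ $W = 2^{-8}R^5K^{-4}$ in place of $2^{-8}R^4K^{-4}$. Then when $1-t \ge R^2$ one has $W^{1/4}K(1-t)^{3/8} \le \tfrac{1}{4}(1-t)^{5/8}(1-t)^{3/8} = \tfrac{1-t}{4}$, and the remainder of that proof goes through verbatim; the only downstream effect is a slightly larger, still polynomial, constant in the second case of Proposition~\ref{prop:small-energy}, which does not change the shape of Theorem~\ref{thm:main-theorem}. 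So: your proposal is the paper's approach, your criticism of the exponent is right and applies equally to the paper, and the suggested fix you sketch is not the one that works, but the issue is harmless once the parameters are adjusted.
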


\begin{proof}
  Write $f = 1_{A'}$, $f_{\bg} = 1_{A'_S}$ and $f_{\sml} = f - f_{\bg}$.  Similarly define $\eta_{\bg} = \eta \cdot 1_S$ and $\eta_{\sml} = \eta - \eta_{\bg}$.
  
  By Proposition \ref{prop:mod-2-lp-clustering} we have that
  \begin{align*}
    \|f_{\sml}\|_{U^3} &\le \| \eta_{\sml} \|_{8/3} \\
                       &\le \| \eta_{\sml} \|_\infty^{1/4} \| \eta_{\sml} \|_2^{3/4} \\
                       &\le W^{1/4} \| \eta_{\sml} \|_2 \ .
  \end{align*}
  By the triangle inequality, $\|f_{\bg}\|_{U^3} \ge \|f\|_{U^3} - \|f_{\sml}\|_{U^3}$ and so
  \begin{align*}
    \frac{\cE(f_{\bg})}{\cE(f)}
    &= \frac{\|f\|_2}{\|f_{\bg}\|_2} \frac{\|f_{\bg}\|_{U^3}}{\|f\|_{U^3}} \\
    &\ge t^{-1/2} \left(1 - \frac{W^{1/4} \|\eta_{\sml}\|_2}{\|f\|_{U^3}} \right) \\
    &= t^{-1/2} \left(1 - W^{1/4} K \sqrt{1 - t} \right)
  \end{align*}
  as required.
\end{proof}

\begin{proof}[Proof of Proposition \ref{prop:discard-small-points}]
  We set $W = 2^{-8} R^4 K^{-4}$ in Lemma \ref{lem:discard-small-points} and take $S$ to be the set from the statement of that lemma, i.e., $S$ is the set of points $x$ where $\eta(x) \ge W$.  Similarly, define $\eta_{\bg}$, $\eta_{\sml}$ as above.

  If the first case of the dichotomy does not hold with these choices, it must be that
  \[
    \|\eta_{\sml}\|_2 \ge R \|\eta\|_2
  \]
  since the other conditions hold by definition.  Using the notation $t = |A'_S| / |A'|$ as above and noting that
  \[
    (\|\eta_{\bg}\|_2 / \|\eta\|_2)^2 = t,\ \ (\|\eta_{\sml}\|_2 / \|\eta\|_2)^2 = 1 - t
  \]
  this says precisely that $(1 - t) \ge R^2$.

  In that case, we apply Lemma \ref{lem:discard-small-points} to deduce that
  \begin{align*}
    \frac{\cE(A'_S)}{\cE(A')} &\ge t^{-1/2} \left(1 - W^{1/4} K \sqrt{1 - t} \right) \\
                                                                   & = t^{-1/2} \left(1 - R \sqrt{1-t} / 4 \right) \\
                                                                   &\ge t^{-1/2} \left(1 - (1 - t)/4 \right) \\
                                                                   &\ge t^{-1/4}
  \end{align*}
  as required (since $3/4 + t/4 \ge t^{1/4}$ by the AM--GM inequality).
\end{proof}

\subsection{Case B: the small energy regime and passing to a fiber}

Our goal in this section is the following result.

\begin{proposition}
  \label{prop:small-energy}
  Again we continue with previous notation.  Fix a parameter $\eps > 0$.  Suppose that
  \[
    \cE(\eta) \le 1 - \eps
  \]
  Then either we may find a set $S \subseteq (\ZZ/2\ZZ)^d$ such that
  \[
    \frac{\cE(A'_S)}{\cE(A')} \ge (|A'| / |A'_S|)^{1/4}
  \]
  and $A'_S \ne A'$, as in Proposition \ref{prop:discard-small-points}; or there is some $\omega \in (\ZZ/2\ZZ)^d$ such that
  \[
    |A'| / |A'_{\omega}| \le 2^{24} K^{16} \eps^{-8}
  \]
  and
  \[
    \frac{\cE(A'_{\omega})}{\cE(A')} \ge 1 + \eps/2 \ .
  \]
\end{proposition}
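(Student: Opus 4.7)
My plan is to feed Proposition \ref{prop:discard-small-points} into a Gowers--Cauchy--Schwarz bound that transfers the $U^3$-structure of $A'$ on $\ZZ^d$ onto $\psi\eta$ on $\FF_2^d$, where $\psi(\omega) := \cE(A'_\omega)$. First I would apply Proposition \ref{prop:discard-small-points} with the calibrated parameter $R = \eps/(2K)$; this makes the resulting threshold $W := 2^{-8} R^4 K^{-4} = 2^{-12} K^{-8} \eps^4$ satisfy $W^{-2} = 2^{24} K^{16} \eps^{-8}$, exactly matching the density bound demanded by the proposition. If the second clause of Proposition \ref{prop:discard-small-points} fires then it is our first clause and we are done; otherwise we obtain a splitting $\eta = \eta_{\bg} + \eta_{\sml}$ with $\eta_{\bg}$ supported on $T := \{\omega : \eta(\omega)/\|\eta\|_2 \ge W\}$ and $\|\eta_{\sml}\|_2 \le (\eps/(2K))\|\eta\|_2$.

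The core task is then to find some $\omega \in T$ with $\cE(A'_\omega) \ge (1+\eps/2)\cE(A')$; membership in $T$ automatically gives $|A'_\omega| = \eta(\omega)^2 \ge W^2|A'|$. To locate this fiber, I would sort the $8$-cubes counted by $\|A'\|_{U^3}^8$ according to the residues of $x$ and $h_1, h_2, h_3$ modulo $2$. The inner count for each residue pattern is a $U^3$-Gowers inner product of eight translates of fibers in $\ZZ^d$, which Gowers--Cauchy--Schwarz bounds by $\prod_{\epsilon \in \{0,1\}^3}\|A'_{\omega_\epsilon}\|_{U^3} = \prod_\epsilon \psi(\omega_\epsilon)\eta(\omega_\epsilon)$. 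Resumming yields the key compression
\[
  \|A'\|_{U^3(\ZZ^d)}^8 \le \|\psi\eta\|_{U^3(\FF_2^d)}^8.
\]

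Now I would argue by contradiction: suppose $\psi(\omega) < (1+\eps/2)\cE(A')$ for every $\omega \in T$. Splitting $\psi\eta = \psi\eta_{\bg} + \psi\eta_{\sml}$, the triangle inequality and monotonicity of $\|\cdot\|_{U^3}$ on non-negative functions bound $\|\psi\eta_{\bg}\|_{U^3} \le (1+\eps/2)\cE(A')\|\eta\|_{U^3} \le (1+\eps/2)(1-\eps)\cE(A')\|\eta\|_2$, invoking the hypothesis $\cE(\eta) \le 1-\eps$. Similarly $\|\psi\eta_{\sml}\|_{U^3} \le \|\eta_{\sml}\|_{U^3} \le \|\eta_{\sml}\|_2 \le (\eps/(2K))\|\eta\|_2$, where I use the trivial bound $\psi \le 1$ (which holds for indicator functions on $\ZZ^d$ via Proposition \ref{prop:u3-energy-inequalities}) and the bound $\|\eta_{\sml}\|_{U^3} \le \|\eta_{\sml}\|_2$ on $\FF_2^d$. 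Combining these with $\cE(A') = 1/K$ and the identity $1 - (1+\eps/2)(1-\eps) = \eps(1+\eps)/2$, the chain collapses to $\cE(A')(1+\eps) \le \cE(A')$, a contradiction.

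The main obstacle I foresee is justifying the inequality $\|g\|_{U^3} \le \|g\|_2$ for non-negative $g$ on $\FF_2^d$ with counting measure, which is the crucial input for the $\eta_{\sml}$ bound. This is a $U^3$-specific Young-type inequality: the $U^2$ analogue is false with counting measure on $\FF_2^d$. I expect it to follow from the Fourier identity $\|g\|_{U^3}^8 = c_d\sum_\xi \hat g(\xi)^8$, the pointwise bound $|\hat g(\xi)| \le \hat g(0)$ for non-negative $g$, and then Parseval together with $\|g\|_1 \le \|g\|_2$ in the probabilistic normalization, which transfers to counting by a matched rescaling.
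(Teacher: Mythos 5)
Your proof is essentially the paper's, with one sub-lemma wrongly justified. You make the same choice $R=\eps/(2K)$, threshold $W=2^{-12}K^{-8}\eps^4$ so $W^{-2}=2^{24}K^{16}\eps^{-8}$, the same Gowers--Cauchy--Schwarz fibering bound $\|A'\|_{U^3}\le\|\gamma\|_{U^3(\FF_2^d)}$ with $\gamma=\psi\eta$, and the same arithmetic; phrasing it as a contradiction rather than directly extracting the winning fibre is cosmetic, and your density bookkeeping via $\omega\in T$ is correct.

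The flagged ``main obstacle'' is real, but your proposed resolution is wrong: there is no identity of the form $\|g\|_{U^3}^8=c_d\sum_\xi\hat g(\xi)^8$ over $\FF_2^d$. In the Walsh basis one finds instead that $\|g\|_{U^3}^8$ is $2^{-4d}$ times a sum over $\FF_2^d$-valued \emph{parallelepipeds} $(\xi_\omega)_{\omega\in\{0,1\}^3}$ of $\prod_\omega\hat g(\xi_\omega)$ (equivalently $\|g\|_{U^3}^8 = 2^{-4d}\|\hat g\|_{U^3}^8$), which does not collapse to a diagonal sum -- the $U^3$-norm, unlike the $U^2$-norm, has no Fourier-diagonal representation. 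Fortunately the inequality you need, that $\|g\|_{U^3}\le\|g\|_2$ for finitely supported non-negative $g$ with counting measure, holds on \emph{any} discrete abelian group (and in particular on $\FF_2^d$; the paper also uses it tacitly for $\gamma_{\sml}$ without comment). An elementary proof: first, for $f\ge 0$ one has $\|f\|_{U^2}^4=\|f*\tilde f\|_2^2\le\|f*\tilde f\|_\infty\|f*\tilde f\|_1\le\|f\|_2^2\|f\|_1^2$, using Cauchy--Schwarz for $\|f*\tilde f\|_\infty$ and non-negativity for $\|f*\tilde f\|_1=\|f\|_1^2$. Then write
\[
  \|g\|_{U^3}^8=\sum_{h}\|g\cdot T_h g\|_{U^2}^4\le\sum_{h}\|g\cdot T_h g\|_2^2\,\|g\cdot T_h g\|_1^2\le\|g\|_2^4\sum_{h,x}g(x)^2g(x+h)^2=\|g\|_2^8,
\]
bounding $\|g\cdot T_hg\|_1\le\|g\|_2^2$ by Cauchy--Schwarz. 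Substituting this corrected justification for the Fourier claim, your argument goes through and matches the paper's.
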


First, we note the following inequality concerning the fibering properties of $U^3$ energy, which follows easily from the Gowers--Cauchy--Schwarz inequality for the $U^3$-norm.

\begin{lemma}
  \label{lem:gowers-cauchy-schwarz}
  Let $G$, $H$ be two abelian groups and $\phi \colon G \to H$ a surjective group homomorphism.  Let $f \colon G \to \RR_{\ge 0}$ be a finitely supported function.  For $h \in H$, write
  \[
    f_h(x) = f(x) \cdot 1_{\phi^{-1}(h)}(x) \ .
  \]
  Finally, define
  \begin{align*}
    \gamma \colon H &\to \RR_{\ge 0} \\
                  h &\mapsto \|f_h\|_{U^3} \ .
  \end{align*}
  Then $\|f\|_{U^3} \le \|\gamma\|_{U^3}$.
\end{lemma}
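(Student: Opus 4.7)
The plan is to expand $\|f\|_{U^3}^8$ by decomposing $f$ according to its fibers under $\phi$, group the terms by the $\phi$-images of $x$ and the shifts $h_1, h_2, h_3$, and then apply the Gowers--Cauchy--Schwarz inequality to each resulting multilinear $U^3$ inner product. The final sum will collapse into exactly $\|\gamma\|_{U^3}^8$.

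In more detail, I would start by observing that since the $f_h$ have disjoint supports and $f = \sum_h f_h$, we have pointwise
\[
  \prod_{\omega \in \{0,1\}^3} f(x + \omega \cdot h) = \prod_{\omega \in \{0,1\}^3} f_{\phi(x + \omega \cdot h)}(x + \omega \cdot h),
\]
where $\omega \cdot h := \omega_1 h_1 + \omega_2 h_2 + \omega_3 h_3$. I would then sum over $(x, h_1, h_2, h_3) \in G^4$, partitioning the sum according to the values $y := \phi(x) \in H$ and $k_i := \phi(h_i) \in H$. Since $\phi$ is a group homomorphism, the label $\phi(x + \omega \cdot h)$ is forced to equal $y + \omega \cdot k$; as $\phi$ is surjective, every choice of $(y, k_1, k_2, k_3) \in H^4$ arises. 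This yields the identity
\[
  \|f\|_{U^3}^8 \;=\; \sum_{y, k_1, k_2, k_3 \in H} \llangle f_{y + \omega \cdot k} : \omega \in \{0,1\}^3 \rrangle_{U^3},
\]
where each inner product on the right-hand side is still computed as a sum over $(x, h_1, h_2, h_3) \in G^4$, but where the indicator constraints $\phi(x) = y$ and $\phi(h_i) = k_i$ are automatically enforced by the disjoint supports of the $f_{h}$.

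Next, I would apply the Gowers--Cauchy--Schwarz inequality to each 8-tuple, obtaining
\[
  \bigl|\llangle f_{y + \omega \cdot k} : \omega \rrangle_{U^3}\bigr| \;\le\; \prod_{\omega \in \{0,1\}^3} \|f_{y + \omega \cdot k}\|_{U^3} \;=\; \prod_{\omega \in \{0,1\}^3} \gamma(y + \omega \cdot k).
\]
Summing over $y, k_1, k_2, k_3 \in H$ recognizes the right-hand side as precisely $\|\gamma\|_{U^3}^8$, giving the claimed inequality. The only minor subtlety is matching measure conventions -- counting measure is implicit on all groups, and the bookkeeping works because the sum over fibers recombines correctly -- but this is straightforward given the disjointness of supports. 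There is no real obstacle; the argument is essentially a rearrangement followed by one application of Gowers--Cauchy--Schwarz.
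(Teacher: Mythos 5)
Your proof is correct and is essentially the same as the paper's: decompose $f = \sum_h f_h$, observe that the expansion of $\|f\|_{U^3}^8$ reduces to Gowers inner products of the $f_h$ indexed by a $3$-dimensional parallelepiped in $H$, apply Gowers--Cauchy--Schwarz to each, and recognize the result as $\|\gamma\|_{U^3}^8$. The only cosmetic difference is that you parametrize the surviving terms directly by $(y, k_1, k_2, k_3) = (\phi(x), \phi(h_1), \phi(h_2), \phi(h_3))$, whereas the paper expands over all $8$-tuples $(h_1,\dots,h_8) \in H^8$ first and then notes that non-parallelepiped tuples contribute zero.
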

\begin{proof}
  Clearly
  \[
    \|f\|_{U^3}^8 = \left\| \sum_{h \in H} f_h \right\|_{U^3}^8
  \]
  and expanding this out we obtain
  \[
    \|f\|_{U^3}^8 = \sum_{h_1, \dots, h_8 \in H} \llangle f_{h_1}, \dots, f_{h_8} \rrangle_{U^3}
  \]
  (where we recall the definition of the Gowers inner product $\llangle \cdot, \dots, \cdot \rrangle_{U^3}$ from Section \ref{sec:reduction-mod-2}).
  If $(h_1, \dots, h_8)$ do \emph{not} form a $3$-dimensional parallelepiped in $H$, the corresponding Gowers inner product will vanish.  Hence
  \[
    \|f\|_{U^3}^8 = \sum_{h, r_1, r_2, r_3 \in H} \llangle f_{h}, f_{h + r_1}, \dots, f_{h + r_1 + r_2 + r_3} \rrangle_{U^3} \ .
  \]
  By the Gowers--Cauchy--Schwarz-type inequality for the $U^3$ inner product, i.e.:
  \[
    \llangle F_1, \dots, F_8 \rrangle_{U^3} \le \|F_1\|_{U^3} \dots \|F_8\|_{U^3}
  \]
  (see e.g.~\cite[Equation 11.6]{tao-vu}), we deduce that
  \[
    \|f\|_{U^3}^8 \le \sum_{h, r_1, r_2, r_3 \in H} \|f_h\|_{U^3} \|f_{h + r_1}\|_{U^3} \dots \|f_{h + r_1 + r_2 + r_3}\|_{U^3}
  \]
  and the right hand side is precisely $\|\gamma\|_{U^3}^8$, as required.
\end{proof}

For our current application, we will apply this when $G = \ZZ^d$, $H = (\ZZ/2\ZZ)^d$, $\phi$ is the obvious projection map and $f = 1_{A'}$.  In particular, we will now write
\begin{align*}
  \gamma \colon (\ZZ/2\ZZ)^d &\to \RR_{\ge 0} \\
  h &\mapsto \|A'_h\|_{U^3} \ .
\end{align*}

From this point, the proof of Proposition \ref{prop:small-energy} is essentially straightforward, but the details are rather involved.

\begin{proof}[Proof of Proposition \ref{prop:small-energy}]
  Let us first sketch the case where $\eta$ only takes large values on its support.  We have that
  \begin{align*}
    \|A'\|_{U^3} &\le \|\gamma \|_{U^3} \\
                 &\le \sup_{x \colon \eta(x) \ne 0} \{ \gamma(x) / \eta(x) \} \|\eta\|_{U^3} \\
                 &\le \sup_{x \colon \eta(x) \ne 0} \{ \gamma(x) / \eta(x) \} (1 - \eps) \|\eta\|_2
  \end{align*}
  where we have applied Lemma \ref{lem:gowers-cauchy-schwarz} and noted that if $\eta(x) = 0$ then necessarily $\gamma(x) = 0$.

  So this states precisely that there exists some $x \in (\ZZ/2\ZZ)^d$ with $\eta(x) \ne 0$ such that
  \[
    \gamma(x) / \eta(x) \ge (1 - \eps)^{-1} \|A'\|_{U^3} / \|A'\|_2
  \]
  and since $\gamma(x) = \|A'_{x}\|_{U^3}$, $\eta(x) = \|A'_{x}\|_2$ this is what we want -- provided we can be sure that $|A'|/|A'_x| = O(K^{O(1)})$ for any $x$ with $\eta(x) \ne 0$.

  By applying Proposition \ref{prop:discard-small-points}, we are free to assume this, up to a small error in the $L^2$ norm in some sense.  The technical work is now to rerun this argument, tolerating this further error term.
  
  Setting $R = \eps / 2 K$ in Proposition \ref{prop:discard-small-points}, either we have the energy increment conclusion as required, or we may decompose
  \[
    \eta = \eta_{\bg} + \eta_{\sml}
  \]
  where $\eta_{\bg}(x) \ge 2^{-12} \eps^4 K^{-8}\|\eta\|_2$ on its support $S \subseteq (\ZZ/2\ZZ)^d$, and $\|\eta_{\sml}\|_2 / \|\eta_2\| \le \eps / 2 K$.  We similarly decompose $\gamma = \gamma_{\bg} + \gamma_{\sml}$ where $\gamma_{\bg} = 1_S \cdot \gamma$.  Then arguing much as before,
  \begin{align*}
    \|A'\|_{U^3} &\le \|\gamma \|_{U^3} \\
                 &\le \|\gamma_{\bg}\|_{U^3} + \|\gamma_{\sml}\|_{U^3} \\
                 &\le \|\gamma_{\bg}\|_{U^3} + \|\gamma_{\sml}\|_2 \\
                 &\le \sup_{x \in S} \{ \gamma(x) / \eta(x) \} \|\eta_{\bg}\|_{U^3} + \|\eta_{\sml}\|_2 \\
                 &\le \sup_{x \in S} \{ \gamma(x) / \eta(x) \} (1 - \eps) \|\eta\|_2 + \|\eta_{\sml}\|_2
  \end{align*}
  where we have made use of the pointwise bound $\gamma(x) \le \eta(x)$ and the trivial bound $\|\eta_{\bg}\|_{U^3} \le \|\eta\|_{U^3}$. Hence, there exists an $x \in S$ such that
  \begin{align*}
    \frac{\gamma(x) / \eta(x)}{\cE(A')} &\ge (1 - \eps)^{-1} \left( 1 - \frac{\|\eta_{\sml}\|_2/\|\eta\|_2}{\|A'\|_{U^3} / \|A'\|_2} \right) \\
                                        &\ge (1 - \eps)^{-1} (1 - (\eps / 2 K) \cdot K) \ge 1 + \eps / 2
  \end{align*}
  as required.  Also, the condition
  \[
    |A'| / |A'_x| \le 2^{24} K^{16} \eps^{-8}
  \]
  is satisfied since $x \in S$ (and by the definition of $S$).
\end{proof}

\subsection{Case C: large energy and passing to a coset}

Recall that our final case handles the possibility that $\eta$ has very close to maximal normalized $U^3$ energy.  Specifically, we will prove the following.

\begin{proposition}
  \label{prop:99pc-energy-case}
  There exists some absolute constant $\delta > 0$ such that the following holds. Suppose $\cE(\eta) \ge 1 - \delta$, and $K \le 2^{d/8 - 4}$. Then there exists a set $S \subseteq (\ZZ/2\ZZ)^d$ such that
  \[
    \frac{\cE(A'_S)}{\cE(A')} \ge (|A'| / |A'_S|)^{1/8}
  \]
  and furthermore $A'_S \ne A'$.
\end{proposition}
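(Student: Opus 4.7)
My plan is to invoke a stability theorem for the inequality $\|f\|_{U^3} \le \|f\|_2$ on $\FF_2^d$ (with counting measure) for non-negative $f$. The equality cases are exactly rescaled coset indicators $c \cdot 1_{x_0+V}$ for subgroups $V \le \FF_2^d$, so $\cE(\eta) \ge 1-\delta$ should give a subgroup $V$, coset $x_0+V$, and constant $c \ge 0$ with $\|\eta - c \cdot 1_{x_0+V}\|_2 \le g(\delta)\|\eta\|_2$ for some function $g(\delta)\to 0$ as $\delta\to 0$. Setting $S := x_0+V$, I first verify the two preliminary conditions. To see that $V \subsetneq \FF_2^d$, combine the structural approximation with the pointwise lower bound $\|\eta\|_\infty \ge K^{-4}\|\eta\|_2$, which follows from Proposition \ref{prop:mod-2-lp-clustering} and H\"older's inequality: since $\eta \approx c \cdot 1_{x_0+V}$ forces $c \approx \|\eta\|_\infty$ and $c^2|V| \approx \|\eta\|_2^2$, we obtain $|V| = O(K^8)$, so the hypothesis $K \le 2^{d/8-4}$ yields $|V| \le 2^{d-32}$, comfortably proper. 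The condition $A'_S \ne A'$ is automatic: since $A'$ affinely generates $\ZZ^d$, its image modulo $2$ (the support of $\eta$) cannot lie in any proper coset of $\FF_2^d$.

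For the energy increment I would apply Lemma \ref{lem:gowers-cauchy-schwarz} to the quotient map $\phi' \colon \ZZ^d \to \FF_2^d/V$, obtaining $\|A'\|_{U^3} \le \|\gamma'\|_{U^3}$ where $\gamma'(v) := \|A' \cap (\phi')^{-1}(v)\|_{U^3}$. Since $A'_S$ is precisely the fiber over $v_0 := \pi(x_0) \in \FF_2^d/V$, we have $a := \gamma'(v_0) = \|A'_S\|_{U^3}$. The structural approximation for $\eta$ transfers to concentrate $\gamma'$ near a point mass at $v_0$: writing $\gamma' = a\,\delta_{v_0} + r$ with $r$ supported off $v_0$ and $\|r\|_2 \le g(\delta)\sqrt{|A'|}$, expand $\|\gamma'\|_{U^3}^8$ by multilinearity of the Gowers inner product. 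A parity observation cuts down the surviving cross-terms dramatically: the set of corners of any $3$-dimensional parallelepiped in $\FF_2^d/V$ landing at $v_0$ is a coset of a linear subspace of $\FF_2^3$, hence of size in $\{0,1,2,4,8\}$. Each surviving term is controlled by the Gowers--Cauchy--Schwarz inequality by a product $a^j \|r\|_{U^3}^{8-j}$; bounding $\|r\|_{U^3} \le \|r\|_2$ and rearranging should yield $a \ge t^{3/8} \|A'\|_{U^3}$, where $t := |A'_S|/|A'| \ge 1 - g(\delta)^2$. Combined with $\|A'_S\|_2 = t^{1/2}\|A'\|_2$, this is exactly the required $\cE(A'_S)/\cE(A') \ge t^{-1/8}$.

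The main obstacle I anticipate is calibrating the stability error $g(\delta)$ against the factor of $K$ that enters whenever one converts between $\|\cdot\|_{U^3}$ and $\|\cdot\|_2$ on $\ZZ^d$: the naive estimate $\|r\|_{U^3} \le \|r\|_2 \le g(\delta) K\|A'\|_{U^3}$ is off by roughly a factor of $K$ from what the density-loss bound $(1-t)\le g(\delta)^2$ demands, yet $\delta$ is required to be an absolute constant. Closing this gap will likely require a sharper structural ingredient --- exploiting that $\eta$ is not an arbitrary non-negative function but the $L^2$-distribution of an integer set, or that Case A has already been applied so $\eta$ is literally supported where it is large --- or else a refined bound on $\|r\|_{U^3}$ that does substantially better than the trivial $L^2$ estimate. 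I expect the hypothesis $K \le 2^{d/8-4}$ to play a quantitative role here beyond merely forcing $V$ to be proper.
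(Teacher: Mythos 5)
Your proposal takes a genuinely different route from the paper, and the obstacle you flag at the end is not a calibration issue that a cleverer estimate will fix---it is a structural gap, and the paper's proof is specifically designed to sidestep it.

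You propose to set $S = x_0 + V$, the coset produced by the stability theorem, and then expand $\|\gamma'\|_{U^3}^8$ around the decomposition $\gamma' = a\,\delta_{v_0} + r$, bounding the cross terms via Gowers--Cauchy--Schwarz by products $a^j\|r\|_{U^3}^{8-j}$. As you correctly observe, the only available bound on $\|r\|_{U^3}$ is $\|r\|_{U^3} \le \|r\|_2 \le g(\delta)\|A'\|_2 = g(\delta)K\|A'\|_{U^3}$. The main term $a^8$ you want to isolate is of size roughly $\|A'\|_{U^3}^8 = K^{-8}\|A'\|_2^8$, so for the cross terms to be negligible you would need $g(\delta)$ to be a negative power of $K$ --- but $\delta$ must be an absolute constant independent of $K$. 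Your suggested fixes (exploit that $\eta$ comes from an integer set; obtain a refined bound on $\|r\|_{U^3}$) do not point to a concrete mechanism, and I do not think one exists along these lines: the quotient $\FF_2^d/V$ is a general group, so the Gowers inner products of $r$ against itself can be large relative to $\|A'\|_{U^3}^8$ without contradicting anything. Also note that the hypothesis $K \le 2^{d/8-4}$ is \emph{not} strong enough to help here; in the paper it is used only to rule out $H = \FF_2^d$ in the structure theorem, never to control error terms.

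The paper avoids this entirely by \emph{not} restricting to the small coset $x+H$. Instead it picks an index-two subgroup $H'$ with $H \le H' < (\ZZ/2\ZZ)^d$ and takes $S$ to be one coset of $H'$, so the relevant quotient is $\ZZ/2\ZZ$. Over $\ZZ/2\ZZ$, Lemma \ref{lem:gowers-cauchy-schwarz} gives an \emph{exact} algebraic expression $\|\gamma\|_{U^3}^8 = \gamma_0^8 + 14\gamma_0^4\gamma_1^4 + \gamma_1^8$, and Lemma \ref{lem:perturbative} extracts the energy increment by a purely algebraic manipulation of this polynomial under the constraint $\alpha_0^2+\alpha_1^2 = 1$. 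No $L^2$ error estimate---and hence no spurious factor of $K$---appears anywhere. The stability theorem (your Proposition \ref{prop:99pc-structure-thm}) is used only to certify that one of the two fibers of $\ZZ^d \to \ZZ/2\ZZ$ carries at most $10\%$ of the $L^2$ mass, which is exactly the hypothesis Lemma \ref{lem:perturbative} requires; and the properness of $H$ (which you argue somewhat differently, via $|V| = O(K^8) \le 2^{d-32}$---this part of your argument is fine) is what guarantees $H'$ exists. Your observation that $A'_S \ne A'$ because $A'$ affinely generates $\ZZ^d$ is correct and matches the paper.

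To salvage your approach you would essentially need the energy increment in Proposition \ref{prop:99pc-energy-case} to hold after restricting directly to a small coset with a polynomial-in-$K$ density cost; as the paper remarks elsewhere, making that work in general appears to be comparable in difficulty to the Polynomial \freiman{}--Ruzsa conjecture over $\FF_2^n$.
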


Again we note that in this phase of the argument we get an energy increment which is polynomially large compared to the density penalty.

Our key ingredient is the following result.

\begin{proposition}
  \label{prop:99pc-structure-thm}
  For all $\eps > 0$ there exists $\delta > 0$ such that the following holds.  Let $G$ be any discrete abelian group and $f \colon G \to \RR_{\ge 0}$ a finitely supported function with $\|f\|_2 = 1$.  Suppose that $\|f\|_{U^3} \ge 1 - \delta$.

  Then there exists a finite subgroup $H \le G$ and an element $x \in G$ such that
  \[
    \left\| f - \frac{1}{|H|^{1/2}} 1_{x + H} \right\|_2 \le \eps \ .
  \]
\end{proposition}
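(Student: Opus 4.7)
My plan is to establish the proposition in two stages: first an underlying exact inequality with a matching equality case, then its quantitative stability.

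\emph{Stage 1 (exact inequality).} Because $f$ is finitely supported, I first replace $G$ by the subgroup it generates, so we may assume $G$ is finitely generated, hence of the form $\ZZ^m \oplus T$ with $T$ a finite abelian group. I would then prove that $\|f\|_{U^3} \le \|f\|_2$ for every non-negative $f$ on such a $G$, with equality exactly when $f = |H|^{-1/2} 1_{x+H}$ for some finite subgroup $H \le G$ and coset representative $x$. The inequality follows by the Cauchy--Schwarz + change-of-variables argument of Proposition \ref{prop:zd-bl-inequality}: setting $\bar g \colon G/2G \to \RR_{\ge 0}$ to be the push-forward of $f^2$, one obtains an upper bound for $\|f\|_{U^3}^8$ in terms of $\|\bar g\|_{U^2, G/2G}^4$, and this is controlled by $\|\bar g\|_1^4 = \|f\|_2^8$ via Young's inequality $\|\bar g\|_{U^2} \le \|\bar g\|_{4/3}$ combined with $\ell^p$-monotonicity. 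On the torsion component $T$ one applies the same scheme directly, since the characters have finite order. Tracking equality cases through each link pins $f$ down: equality in $\ell^p$-monotonicity forces $\bar g$ to be a point mass (so $f^2$ is supported on a single coset of $2G$); recursing inside that coset and repeating the analysis ultimately identifies a finite subgroup $H$ and forces $f$ to be the corresponding normalized coset indicator.

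\emph{Stage 2 (stability).} Near-equality $\|f\|_{U^3} \ge (1-\delta) \|f\|_2$ implies near-equality at every step of the above chain. At each Cauchy--Schwarz step, a $\delta$-deficit gives $O(\sqrt \delta)$ closeness of the two relevant vectors in $L^2$. The near-equality version of the Young step $\|\bar g\|_{U^2} \le \|\bar g\|_{4/3}$ is itself a $99\%$-type inverse theorem for the $U^2$ norm (in this exponent it reduces to a simple convexity argument since $\bar g \ge 0$), forcing $\bar g$ to be $L^2$-close to a constant multiple of the indicator of a coset of a subgroup of $G/2G$; and the $\ell^p$-monotonicity stability forces that coset to be a single point up to a small error. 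Combining these yields that all but an $\eps$-fraction of the $L^2$ mass of $f$ lives on a single coset of $2G$ and is approximately equidistributed there. Iterating the argument inside this coset (treated as a smaller discrete abelian group) then refines the subgroup and coset representative until the structural claim is achieved.

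\emph{Main obstacle.} The delicate point is controlling how errors accumulate through potentially many Cauchy--Schwarz steps and through the recursion into smaller cosets, so that the final $L^2$-closeness bound depends only on $\eps$ (and not on the unknown $|H|$ or on invariants of the ambient group $G$). In particular, the $99\%$-inverse statement needed for Young's inequality on $G/2G$ must be uniform in the size of that quotient. I expect this to require reformulating the iteration so that, after the first step, we work inside a subgroup whose size is controlled by $1/\eps$, rather than letting the recursion pass to arbitrarily smaller cosets indefinitely; a cleaner alternative would be a direct compactness argument combined with the exact equality characterization, extending by continuity at the (tightness-violating) cost of a non-explicit relation between $\delta$ and $\eps$.
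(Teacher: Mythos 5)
The paper's proof is a citation: Proposition \ref{prop:99pc-structure-thm} is deduced from \cite[Theorem 1.4]{tao-eisner} together with the observation that for non-negative $f$ one may replace the extremizer $\chi = |H|^{-1/2} 1_{x+H}\, e(P)$ by $|\chi|$. Your proposal is a genuinely different, self-contained route --- and the paper itself remarks in a later aside that such a proof would be desirable --- but as sketched it has a gap that is fatal in exactly the regime where the proposition is later applied.

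\textbf{The $2$-torsion problem.} When $G$ has $2$-torsion, the Cauchy--Schwarz plus change-of-variables step modelled on Proposition \ref{prop:zd-bl-inequality} acquires a spurious factor of $|G[2]|$, because the map $(x,h_1,h_2,h_3) \mapsto (x,\, x+h_1+h_2,\, x+h_1+h_3,\, x+h_2+h_3)$ on $G^4$ has kernel $\{(0,h,h,h) : 2h = 0\} \cong G[2]$. With counting measure one actually gets
\[
\|f\|_{U^3}^8 \le |G[2]| \cdot \|\bar g\|_{U^2,\,G/2G}^4 ,
\]
and continuing with $\|\bar g\|_{U^2} \le \|\bar g\|_{4/3} \le \|\bar g\|_1$ only produces $\|f\|_{U^3}^8 \le |G[2]| \cdot \|f\|_2^8$, which is worthless unless $G[2]$ is trivial. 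This is not an artifact of book-keeping: take $G = \FF_2^d$ and $f = |H|^{-1/2}\, 1_H$ for a proper subgroup $H$. Then $\|f\|_{U^3} = \|f\|_2 = 1$ (an exact extremizer), yet a direct count shows the Cauchy--Schwarz upper bound equals $2^d/|H|$, which can be arbitrarily large. So this step cannot even recover the exact inequality, let alone characterize near-extremizers, on groups with substantial $2$-torsion. Since the proposition is invoked in the paper precisely for $\eta$ on $G = (\ZZ/2\ZZ)^d$, where $\eta$ is close to a \emph{proper} coset indicator, this is exactly the configuration your argument cannot handle. The remark that ``on the torsion component $T$ one applies the same scheme directly'' does not address the issue; the scheme itself is lossy there.

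\textbf{Two further unresolved points.} First, the claim that the $99\%$-inverse statement for Young's inequality $\|\bar g\|_{U^2} \le \|\bar g\|_{4/3}$ on $G/2G$ ``reduces to a simple convexity argument since $\bar g \ge 0$'' is not substantiated, and I do not believe it: the classification of near-extremizers for Young's inequality, uniformly over the underlying group, is exactly the hard input that \cite{tao-eisner} imports from Fournier \cite{fournier}, and non-negativity of $\bar g$ does not dissolve that difficulty. Second, the compactness fallback does not give the required uniformity: $G$ ranges over all discrete abelian groups of unbounded size, and the quantifiers of the proposition (for all $\eps$ there is $\delta$, uniform in $G$, $f$, and $|H|$) do not sit inside any obvious compact family.
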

\begin{proof}
  This is essentially a special case of \cite[Theorem 1.4]{tao-eisner}.  There, the authors consider functions which need not be real-valued and non-negative, and deduce that they are close in $L^2$ to a function $\chi(y) = \frac{1}{|H|^{1/2}} 1_{x + H}(y) e(P(y))$ where $P$ is some quadratic polynomial (in fact, they also prove a result for all $U^k$-norms and any LCA group).  When $f$ is real-valued and non-negative, it is clear that replacing $\chi$ by $|\chi|$ can only decrease the $L^2$ distance from $f$, so we deduce the above as an easy consequence.
\end{proof}


So, we may assume that $\eta$ is roughly the indicator function of a coset.  As we remarked earlier, this will be a proper coset, since $\eta$ has small support.  In particular, this means that most, but not all, of the mass of $\eta$ lies in some co-dimension one coset of $(\ZZ/2\ZZ)^d$.  Under these circumstances, it is always possible to obtain a modest energy increment, as the following lemma shows.

\begin{lemma}
  \label{lem:perturbative}
  Let $\phi \colon \ZZ^d \to \ZZ/2\ZZ$ be a surjective homomorphism.  For $i \in \{0,1\}$ write $A'_i = \phi^{-1}(\{i\})$, and write
  \[
    \alpha_i = (|A'_i|/|A'|)^{1/2} = \|A'_i\|_2 / \|A'\|_2
  \]
  so that $\alpha_0^2 + \alpha_1^2 = 1$.  Suppose that $\min\{\alpha_0, \alpha_1\} \le 0.1$.  Then for some $i \in \{0,1\}$ we have
  \[
    \frac{\cE(A'_i)}{\cE(A')} \ge (|A'| / |A'_i|)^{1/8} \ .
  \]
\end{lemma}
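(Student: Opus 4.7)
The plan is to apply Lemma~\ref{lem:gowers-cauchy-schwarz} with $G = \ZZ^d$, $H = \ZZ/2\ZZ$ and the given homomorphism $\phi$, applied to $f = 1_{A'}$. The resulting function $\gamma \colon \FF_2 \to \RR_{\ge 0}$ takes just two values, $\gamma(i) = \|A'_i\|_{U^3}$; I will write $a = \gamma(0)$ and $b = \gamma(1)$. The lemma then says $\|A'\|_{U^3}^8 \le \|\gamma\|_{U^3}^8$. On the two-element group $\FF_2$ the right hand side is an explicit finite sum: among the $16$ tuples $(x,h_1,h_2,h_3) \in \FF_2^4$, exactly two have $\vec h = 0$ and produce $8$ equal vertices (contributing $a^8 + b^8$ in total), while each of the remaining $14$ has some $h_i \ne 0$, so $\omega \cdot \vec h$ takes each value in $\FF_2$ exactly $4$ times as $\omega$ ranges over $\{0,1\}^3$, producing a $4$--$4$ split at the vertices. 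Consequently
\[
  \|\gamma\|_{U^3}^8 = a^8 + b^8 + 14\, a^4 b^4,
\]
and so $\|A'\|_{U^3}^8 \le a^8 + b^8 + 14\, a^4 b^4$.

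Next is a routine rescaling. Writing $\cE = \cE(A')$ and $\cE_i = \cE(A'_i)$, one has $a = \cE_0 \alpha_0 \|A'\|_2$ and $b = \cE_1 \alpha_1 \|A'\|_2$. Dividing the previous display by $\|A'\|_{U^3}^8 = \cE^8 \|A'\|_2^8$ yields
\[
  1 \le \bigl(\cE_0 \alpha_0/\cE\bigr)^8 + \bigl(\cE_1 \alpha_1/\cE\bigr)^8 + 14\, \bigl(\cE_0 \cE_1 \alpha_0 \alpha_1/\cE^2\bigr)^4.
\]

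It then suffices to derive a contradiction from the assumption that $\cE_i < \cE\, \alpha_i^{-1/4}$ for both $i$. Under this assumption the three summands above are strictly less than $\alpha_0^6$, $\alpha_1^6$, and $\alpha_0^3 \alpha_1^3$ respectively, giving $1 < \alpha_0^6 + \alpha_1^6 + 14\, \alpha_0^3 \alpha_1^3$. Applying the sum-of-cubes identity together with $\alpha_0^2 + \alpha_1^2 = 1$ gives $\alpha_0^6 + \alpha_1^6 = 1 - 3 \alpha_0^2 \alpha_1^2$, and the inequality rearranges to $0 < \alpha_0^2 \alpha_1^2 (14\, \alpha_0 \alpha_1 - 3)$, forcing $\alpha_0 \alpha_1 > 3/14$. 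But the hypothesis $\min\{\alpha_0, \alpha_1\} \le 0.1$ together with $\max\{\alpha_0, \alpha_1\} \le 1$ gives $\alpha_0 \alpha_1 \le 0.1 < 3/14$, a contradiction.

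I do not anticipate any genuine obstacle: the only concrete calculation is the explicit formula for $\|\gamma\|_{U^3}^8$ on $\FF_2$, which is a short and mechanical enumeration. The numerical constant $0.1$ in the hypothesis plays only the role of a value strictly below $3/14$, so the precise slack is not important; the degenerate cases where one of $\alpha_0, \alpha_1$ vanishes are trivial, as then $A'_i = A'$ for the other index and the conclusion holds with equality.
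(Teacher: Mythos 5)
Your proof is correct and follows the paper's argument essentially step for step: apply Lemma~\ref{lem:gowers-cauchy-schwarz} to the projection $\ZZ^d \to \ZZ/2\ZZ$, compute $\|\gamma\|_{U^3}^8 = \gamma_0^8 + 14\gamma_0^4\gamma_1^4 + \gamma_1^8$, normalize, assume for contradiction $\cE_i/\cE < \alpha_i^{-1/4}$ for both $i$, and derive $\alpha_0^6 + 14\alpha_0^3\alpha_1^3 + \alpha_1^6 > 1$, which is incompatible with $\alpha_0^2 + \alpha_1^2 = 1$ and $\min\{\alpha_0,\alpha_1\} \le 0.1$. The only divergence is in the final algebra, where you use the sum-of-cubes identity $\alpha_0^6 + \alpha_1^6 = 1 - 3\alpha_0^2\alpha_1^2$ to reduce directly to $\alpha_0\alpha_1 > 3/14$, which is cleaner than the paper's substitution $t = \alpha_0^2 - \alpha_1^2$ and subsequent expansion.
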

\begin{proof}
  The analysis is very similar to that of the previous section.  We define
  \[
    \gamma_i = \|A'_i\|_{U^3}
  \]
  for $i \in \ZZ/2\ZZ$.  By Lemma \ref{lem:gowers-cauchy-schwarz} we have that
  \[
    \|A'\|_{U^3} \le \|\gamma\|_{U^3} = \left(\gamma_0^8 + 14 \gamma_0^4 \gamma_1^4 + \gamma_1^8 \right)^{1/8} \ .
  \]
  Write $\gamma'_i = \gamma_i / \|A'\|_{U^3}$.  Then we may assume for contradiction that
  \[
    \gamma'_i / \alpha_i < \alpha_i^{-1/4}
  \]
  for each $i$, since otherwise the conclusion of the lemma is satisfied.  That is, we may assume $\gamma_i' < \alpha_i^{3/4}$.  Combining this with
  \[
    (\gamma'_0)^8 + 14 (\gamma'_0)^4 (\gamma'_1)^4 + (\gamma'_1)^8 \ge 1
  \]
  from above, we find that
  \[
    \alpha_0^6 + 14 \alpha_0^3 \alpha_1^3 + \alpha_1^6 > 1 \ .
  \]
  From this fact, together with $\alpha_0^2 + \alpha_1^2 = 1$, 
  we wish to deduce that $\min\{\alpha_0, \alpha_1\} > 0.1$.  This is now a routine algebraic exercise.  We set $t = \alpha_0^2 - \alpha_1^2$, which lies in the range $[-1,1]$, so that
  \[
    \left(\frac{1+t}{2}\right)^3 + 14 \left(\frac{1+t}{2}\right)^{3/2} \left(\frac{1-t}{2}\right)^{3/2} + \left(\frac{1-t}{2}\right)^3 > 1
  \]
  which, after expansion and rearrangement, yields
  \[
    2 + 6 t^2 + 14 (1 - t^2)^{3/2} > 8
  \]
  which further rearranges to
  \[
    (1 - t^2)^{3/2} > \frac{3}{7} (1 - t^2)
  \]
  and hence we deduce that 
  \[
    |t| < \frac{2 \sqrt{10}}{7}
  \]
  and so
  \[
    \min\{\alpha_0, \alpha_1\} = \left(\frac{1 - |t|}{2}\right)^{1/2} > 0.21
  \]
  as required.
\end{proof}

With this unenlightening computation complete, we formalize the discussion above to deduce Proposition \ref{prop:99pc-energy-case}.

\begin{proof}[Proof of Proposition \ref{prop:99pc-energy-case}]
  By Proposition \ref{prop:99pc-structure-thm} we may choose $\delta > 0$ such that for $\eta$ satisfying the condition of the statement, there is a subgroup $H \le (\ZZ/2\ZZ)^d$ and $x \in (\ZZ/2\ZZ)^d$ such that
  \[
    \left\| \eta / \|\eta\|_2 - \frac{1}{|H|^{1/2}} 1_{x+H} \right\|_2 \le 0.1 \ .
  \]

  We now argue that (WLOG) $H$ is proper.  Suppose for contradiction that it is not.  Once again, it is convenient to apply Proposition \ref{prop:discard-small-points}, now with parameter $R = 1/2$ (say); so that, either we have an energy increment at least as large as the one we require and are done, or we may decompose $\eta = \eta_{\bg} + \eta_{\sml}$ as in that statement.  By the triangle inequality, we deduce that
  \[
    \left\| \eta_{\bg} / \|\eta\|_2 - 2^{-d/2} \right\|_2 \le 0.1 + \|\eta_{\sml}\|_2 / \|\eta\|_2 \le 0.6 \ .
  \]
  However, since $\eta_{\bg}(y) / \|\eta\|_2$ is either $0$ or at least $2^{-12} K^{-4} \ge 2^4 \cdot 2^{-d/2}$, we have that
  \[
    \sum_{y \in (\ZZ/2\ZZ)^d} \left|\eta_{\bg}(y)/\|\eta\|_2 - 2^{-d/2}\right|^2 \ge \sum_{y \in (\ZZ/2\ZZ)^d} 2^{-d} = 1
  \]
  which is a contradiction.

  Hence, we may assume that $H$ is proper, and so there exists an index two subgroup $H \le H' < (\ZZ/2\ZZ)^d$.  We know that almost all of the $L^2$ mass of $\eta$ (or equivalently $A'$) is contained in just one of the cosets of $H'$.  Specifically, if $(y+H')$ is the coset \emph{not} containing $(x+H)$, then
  \[
    \| (\eta / \|\eta\|_2) \cdot 1_{y+H'}\|_2 \le \left\| \eta / \|\eta\|_2 - \frac{1}{|H|^{1/2}} 1_{x+H} \right\|_2 \le 0.1 \ .
  \]
  Hence, quotienting by $H'$ we obtain a map $\ZZ^d \to \ZZ/2\ZZ$ such that the hypotheses of Lemma \ref{lem:perturbative} apply.  This completes the proof.
\end{proof}

\subsection{Completing the proof of Theorem \ref{thm:main-theorem}}

It remains only to put all these pieces together to deduce Theorem \ref{thm:main-theorem}.

Initially, we are given a set $A \subseteq \ZZ$ such that $E(A) / |A|^3 \ge 1 / L$.  By Proposition \ref{prop:u3-energy-inequalities}, we deduce that the $U^3$ energy is also large, in that $\cE(A) \ge L^{-1/2}$.  

We now formally set up a recursive procedure for passing to subsets of $A$ with larger $U^3$ energy, stopping when the resulting set has small Fre\u{\i}man dimension.

Write $A_0 = A$, and write $K_i = 1 / \cE(A_i)$.  All our previous discussion will establish the following recursive fact.

\begin{lemma}
  \label{lem:recursive-lemma}
  Given a set $A_n \subseteq \ZZ$, either $\dim(A_n) \le 8 \log_2 K_n + 32$, or there exists a proper subset $A_{n+1} \subsetneq A_n$ such that $K_{n+1} \le K_n$ and
  \[
    \log_2\left(\frac{|A_n|}{|A_{n+1}|}\right) \ll \big(\log_2 K_0 + O(1)\big) \log_2\left(\frac{K_n}{K_{n+1}}\right) \ .
  \]
\end{lemma}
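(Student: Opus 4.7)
The plan is to combine the three case propositions of this section into a single increment step. The statement amounts to: if $d := \dim(A_n)$ exceeds $8 \log_2 K_n + 32$, then we can produce $A_{n+1}$ whose logarithmic density loss is at most $(\log_2 K_0 + O(1))$ times the logarithmic gain in $1/\cE$.

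Assuming $d > 8 \log_2 K_n + 32$, I first pass to a \freiman{}-isomorphic copy $A'_n \subseteq \ZZ^d$ which affinely generates $\ZZ^d$; this preserves $\cE$. Let $\eta$ denote the $L^2$-distribution of $1_{A'_n}$ modulo $2$, and fix the absolute constant $\delta$ from Proposition \ref{prop:99pc-energy-case}. I then split on whether $\cE(\eta) \le 1 - \delta$ or not.

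If $\cE(\eta) \le 1 - \delta$, I invoke Proposition \ref{prop:small-energy} with $\eps = \delta$. Either it produces a proper subset $A'_S$ with $\cE(A'_S)/\cE(A'_n) \ge (|A'_n|/|A'_S|)^{1/4}$, giving a log-ratio of at most $4$; or it produces a fiber $A'_\omega$ with density loss at most $2^{24} K_n^{16} \delta^{-8}$ and energy gain factor at least $1 + \delta/2$, yielding a log-ratio of order $\log_2 K_n + O(1) \le \log_2 K_0 + O(1)$, using that the sequence $(K_n)$ is non-increasing so $K_n \le K_0$. If instead $\cE(\eta) > 1 - \delta$, the hypothesis $d > 8 \log_2 K_n + 32$ rearranges precisely to $K_n \le 2^{d/8 - 4}$, so Proposition \ref{prop:99pc-energy-case} applies and yields $A'_S$ with log-ratio at most $8$. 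Pulling back through the \freiman{} isomorphism in every case gives the required $A_{n+1}$.

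I do not anticipate any serious obstacle; the three propositions were designed precisely to dovetail and this lemma just formalises the dovetailing. The only points needing care are: matching $\eps$ to $\delta$ so that the small-energy and large-energy regimes cover all of $[0,1]$; checking that the numerical dimension bound $8 \log_2 K_n + 32$ matches the applicability constraint of Proposition \ref{prop:99pc-energy-case}; and verifying properness $A'_\omega \subsetneq A'_n$ in the Case B fiber subcase, which follows from affine generation forcing $A'_n$ to hit at least two residue classes modulo $2$ (as $d \ge 1$ under our hypothesis).
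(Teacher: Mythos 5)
Your proposal is correct and follows the same route as the paper: pass to a Fre\u{\i}man-isomorphic copy $A'_n \subseteq \ZZ^d$ (using that $\cE$ is preserved, Proposition~\ref{prop:u3-freiman}), dichotomize on $\cE(\eta) \lessgtr 1-\delta$, invoke Proposition~\ref{prop:small-energy} with $\eps = \delta$ in the small-energy regime and Proposition~\ref{prop:99pc-energy-case} (its hypothesis $K_n \le 2^{d/8-4}$ being exactly the negation of the dimension bound) in the large-energy regime, and take logarithms to obtain the stated trade-off. Your added note on properness of the fiber $A'_\omega$ via affine generation is a correct observation that the paper leaves implicit.
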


\begin{remark}
  Reviewing the preceding arguments, it is fairly clear that the constant $32$ appearing here could be replaced by $0.001$ (or indeed any fixed positive constant), at the expense of worsening the implied $O(1)$ constants elsewhere.  This means that the bound on the Fre\u{\i}man dimension of $8 \log_2(K) + 0.001$ we obtain this way is essentially best possible in some sense, at least if we are allowed to replace the indicator function $A$ with a (Gaussian) density.

  The reason we do not stress this optimality elsewhere in this paper is that it only holds with reference to the $U^3$ energy.  When we attempt to transfer this result to other measures of structure -- such as $U^2$ energy or doubling -- even the leading constant in $O(\log K)$ is no longer optimal.
\end{remark}

\begin{proof}[Proof of Theorem \ref{thm:main-theorem} assuming Lemma \ref{lem:recursive-lemma}]
  Since each inclusion is proper, the process must terminate.  That is, there is some $N$ such that $A_N$ is defined and
  \[
    \dim(A_N) \le 8 \log_2 K_N + 32 \le 8 \log_2 K_0 + 32 \le 4 \log_2 L + 32 \ .
  \]
  By telescoping the recursive size inequality, we deduce that
  \[
    \log_2\left(\frac{|A_0|}{|A_{N}|}\right) \ll \big(\log_2 K_0 + O(1)\big) \log_2\left(\frac{K_0}{K_{N}}\right)
  \]
  and bounding $K_N \ge 1$ trivially we deduce
  \[
    |A_0|/|A_N| \ll \exp\big(O(\log^2 K_0) \big) \ll \exp\big(O(\log^2 L )\big)
  \]
  as required.
\end{proof}

\begin{proof}[Proof of Lemma \ref{lem:recursive-lemma}]

  Let $d = \dim(A_n)$.  By the definition of Fre\u{\i}man dimension, we may find a subset $A_n' \subseteq \ZZ^d$ and a Fre\u{\i}man ($2$-)isomorphism $\phi \colon A_n \leftrightarrow A_n'$.

  We remark the following.

  \begin{proposition}
    \label{prop:u3-freiman}
    The $U^3$ energy is preserved under Fre\u{\i}man isomorphisms.  In the current setting, this means that $\|A_n\|_{U^3} = \|A_n'\|_{U^3}$.
  \end{proposition}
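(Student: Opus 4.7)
The plan is to exhibit a bijection between the 3-dimensional parallelepipeds with vertices in $A_n$ and those with vertices in $A_n'$; this is exactly what is needed since $E_{U^3}$ is by definition the count of such parallelepipeds (including degenerate ones). Expanding the defining formula, $E_{U^3}(A_n)$ equals the number of tuples $(x, h_1, h_2, h_3) \in \ZZ^4$ such that all eight vertices $x + \omega_1 h_1 + \omega_2 h_2 + \omega_3 h_3$, $\omega \in \{0,1\}^3$, lie in $A_n$, and similarly for $A_n'$.

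Given such a tuple in $A_n$, I would set $x' = \phi(x)$ and $h_i' = \phi(x + h_i) - x'$ for $i = 1, 2, 3$, and verify that $\phi(x + \omega_1 h_1 + \omega_2 h_2 + \omega_3 h_3) = x' + \omega_1 h_1' + \omega_2 h_2' + \omega_3 h_3'$ for every $\omega \in \{0,1\}^3$ by induction on the weight $|\omega|$. The base cases of weight $\leq 1$ are built into the definitions of $x'$ and the $h_i'$. For weight-two $\omega$, say $\omega = (1,1,0)$, the additive quadruple identity $(x+h_1) + (x+h_2) = x + (x+h_1+h_2)$ involves only vertices of the parallelepiped, hence only elements of $A_n$, so the Fre\u{\i}man homomorphism property of $\phi$ immediately yields $\phi(x+h_1+h_2) = \phi(x+h_1) + \phi(x+h_2) - \phi(x) = x' + h_1' + h_2'$; the other two weight-two cases are symmetric. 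For the top vertex $\omega = (1,1,1)$, I would use the relation $(x + h_1 + h_2) + (x + h_3) = x + (x + h_1 + h_2 + h_3)$ and apply the Fre\u{\i}man homomorphism property once more, combining it with the weight-two case just established.

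Each parallelepiped in $A_n$ thus produces a parallelepiped in $A_n'$, and the construction is visibly reversible using the fact that $\phi^{-1}$ is also a Fre\u{\i}man homomorphism, yielding the desired bijection and hence $E_{U^3}(A_n) = E_{U^3}(A_n')$. I do not expect any genuine obstacle here; the argument is direct bookkeeping on the definitions, and the same reasoning in fact shows that any Fre\u{\i}man $2$-isomorphism preserves $U^k$-norms of indicator functions for every $k \ge 2$.
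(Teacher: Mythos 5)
Your proof is correct and takes essentially the same approach as the paper: both observe that the three-dimensional parallelepiped structure is determined by the additive quadruple relations among the vertices, which a Fre\u{\i}man ($2$-)homomorphism preserves by definition. Your weight-induction explicitly carries out the ``easy exercise'' that the paper phrases more abstractly as a characterization of parallelepipeds via their codimension-one faces, and your closing observation that the same induction handles $U^k$ for all $k$ is also correct.
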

  \begin{proof}
    It suffices to show that a Fre\u{\i}man homomorphism sends $3$-dimensional parallelepipeds to $3$-dimensional parallelepipeds.  Certainly this holds for $2$-dimensional parallelograms, also known as additive quadruples: indeed, this is exactly the definition of a Fre\u{\i}man homomorphism.

    We observe that for an abelian group $G$, a configuration $\{0,1\}^3 \to G$ is a $3$-dimensional parallelepiped (that is, has the form $\omega \mapsto g_0 + \omega_1 g_1 + \omega_2 g_2 + \omega_3 g_3$) if and only if every codimension one face of $\{0,1\}^3$ (that is, sets of the form $\{ \omega \in \{0,1\}^d \colon \omega_i = r \}$ for fixed $i \in \{1,2,3\}$ and $r \in \{0,1\})$ maps to an additive quadruple.  The proof of this is an easy exercise, and it clearly implies the result.
  \end{proof}

  We now invoke one of Proposition \ref{prop:small-energy} and Proposition \ref{prop:99pc-energy-case}: which depends on whether the quantity $\cE(\eta)$ is bigger or smaller than $(1 - \delta)$, where $\delta$ is the positive absolute constant appearing in Proposition \ref{prop:99pc-energy-case}. We set $A'_{n+1}$ to be the set $A'_S$ produced by either of those results as appropriate, and define $A_{n+1} = \phi^{-1}(A'_{n+1})$.
  
  We conclude that either
  \[
    K_n / K_{n+1} \ge (|A_n| / |A_{n+1}|)^{1/8}
  \]
  as implied by either the first case of Proposition \ref{prop:small-energy} or by Proposition \ref{prop:99pc-energy-case}; or,
  \[
    K_{n} / K_{n+1} \ge 1 + \delta/2
  \]
  and
  \[
    |A_n| / |A_{n+1}| \le 2^{24} K_n^{16} \delta^{-8} \le 2^{24} K_0^{16} \delta^{-8}
  \]
  as returned by the second case of Proposition \ref{prop:small-energy}.  Taking logarithms, we find that either
  \[
    \log_2\left(\frac{|A_n|}{|A_{n+1}|}\right) \le 8 \log_2\left(\frac{K_n}{K_{n+1}}\right)
  \]
  or
  \begin{align*}
    \log_2\left(\frac{|A_n|}{|A_{n+1}|}\right) &\le 24 + 8 \log(1/\delta) + 16 \log K_0 \\
                                               &\le \frac{24 + 8 \log(1/\delta) + 16 \log K_0}{\log(1 + \delta/2)} \log\left(\frac{K_n}{K_{n+1}}\right)
  \end{align*}
  either of which is acceptable for Lemma \ref{lem:recursive-lemma}.
\end{proof}

\begin{remark}
  The only bar to obtaining completely explicit constants in Theorem \ref{thm:main-theorem} is currently the mysterious constant $\delta$ obtained from the structure theorem for functions with almost maximal $U^3$ energy.  Doubtless this constant is moderately sensible; however, no explicit dependencies are given in \cite{tao-eisner} and one would have to examine their proofs, which in turn rely on the classification of almost extremizers in Young's inequality due to Fournier \cite{fournier}.

  Alternatively, one might reasonably seek a self-contained proof of Proposition \ref{prop:99pc-structure-thm}, exploiting the fact that we only require it for non-negative functions, rather than going via \cite{tao-eisner}.

  Given we have made no attempt to optimize constants in any part of this argument, actually pursuing this would perhaps be in poor taste. The purpose of this comment is to reassure the reader that the implied constants in Theorem \ref{thm:main-theorem} are not too ridiculous.
\end{remark}

\appendix

\section{Alternative proof of an inequality of Shkredov}
\label{sec:u3-inequality}

We now give, for interest only, an alternative proof of the harder direction of Proposition \ref{prop:u3-energy-inequalities}, a result due to Shkredov \cite{shkredov}.  Since this argument is orthogonal to the main theme of the paper, we place it separately here.

Recall we are trying to show the following.

\begin{proposition}
  \label{prop:hard-energy-inequality}
  If $Z$ is any abelian group and $f \colon Z \to \RR_{\ge 0}$ a non-negative finitely supported function, then
  \[
    \| f \|_{U^3} \ge \| f \|_{U^2}^2 / \|f\|_1 \ .
  \]
\end{proposition}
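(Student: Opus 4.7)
The plan is to prove the inequality by an entropy argument, running in parallel with Shkredov's combinatorial Bollob\'as--Thomason proof.  By homogeneity I may normalise $\|f\|_1 = 1$, and by a tensor-power-and-discretisation reduction (approximate $f$ by $|A|^{-1} 1_A$ on a large product set, using that both sides of the inequality tensorise) it suffices to treat the case $f = |A|^{-1} 1_A$ for a finite set $A \subseteq Z$.  The inequality then becomes the combinatorial statement $E_{U^3}(A) \cdot |A|^8 \ge E(A)^4$.

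Next I reformulate this probabilistically.  Define $g(d_1, d_2) = \#\{a \in A : a, a+d_1, a+d_2, a+d_1+d_2 \in A\}$, so that $E(A) = \sum_{d_1, d_2} g(d_1, d_2)$ and $E_{U^3}(A) = \sum_{d_1, d_2} g(d_1, d_2)^2$.  Let $Q = (Q_0, Q_1, Q_2, Q_3)$ be a uniformly random additive quadruple in $A^4$, and let $(D_1, D_2) = (Q_1 - Q_0, Q_2 - Q_0)$ be its shift pair, which has distribution $p = g / E(A)$.  A short direct computation shows $\|p\|_2^2 = E_{U^3}(A) / E(A)^2$, so the target inequality is equivalent to the R\'enyi collision bound $H_2(D_1, D_2) \le 8 \log |A| - 2 \log E(A)$; and since $H_2 \le H$, it suffices to establish the stronger Shannon-entropy bound.

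The heart of the proof is this Shannon-entropy estimate, which I would prove via an entropy-Loomis--Whitney / Shearer inequality applied to $Q$.  Two pieces of structural information compete: each marginal $Q_i$ lies in $A$, giving $H(Q_i) \le \log|A|$, while the single quadruple constraint $Q_0 + Q_3 = Q_1 + Q_2$ forces $H(Q) = \log E(A)$.  Crucially, the shift pair $(D_1, D_2)$ admits several equivalent descriptions in terms of the $Q_i$, for example $D_1 = Q_1 - Q_0 = Q_3 - Q_2$ and $D_2 = Q_2 - Q_0 = Q_3 - Q_1$, providing multiple Shearer-style ``coverings'' of the $(D_1, D_2)$-information by subsets of the coordinates.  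A careful fractional Shearer argument exploiting these redundancies should yield the desired bound.

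The main obstacle is arranging the Shearer covering to obtain exactly $8 \log |A| - 2 \log E(A)$: a naive covering by all four singletons gives only $H(D_1, D_2) \le H(Q) = \log E(A)$, which is too weak whenever $E(A) \gg |A|^{8/3}$.  Mirroring Shkredov's use of the tensor power trick after Bollob\'as--Thomason, I expect to need to pass to two independent copies $(Q, Q')$ of the quadruple -- an eight-variable configuration in $A^8$ subject to two linear constraints -- and apply a fractional Shearer inequality to this enlarged joint random variable, before pulling the bound back to $H(D_1, D_2)$.  Working out the correct fractional weights, and verifying that the entropy estimate can indeed be made to yield an integer-looking bound of the right form, is the principal technical content of the argument.
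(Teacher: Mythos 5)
Your reduction is sound as far as it goes, and it closely parallels the paper's: you normalize to $f = 1_A$, pass to the shift-pair distribution $p(d_1,d_2) = g(d_1,d_2)/E(A)$ (which is the paper's $W$), observe that the target is a collision-entropy bound $H_2(D_1,D_2) \le 8\log|A| - 2\log E(A)$, and use the R\'enyi monotonicity $H_2 \le H$ to pass to Shannon entropy. (The tensor-power/discretization reduction to indicators is an unnecessary extra step --- the paper's argument applies to general nonnegative $f$ directly --- but it is not wrong.) The problem is that you then stop precisely at the crux. You explicitly describe the remaining Shannon bound as ``the principal technical content of the argument'' and propose to obtain it by a fractional Shearer inequality on two independent copies $(Q,Q')$, without specifying the covering or verifying that any such covering yields the required bound. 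That is not a proof, and I do not believe this plan, as stated, goes through: Shearer controls the joint entropy of a tuple in terms of entropies of sub-tuples, but the constraints you have ($H(Q_i) \le \log|A|$, $H(Q) = \log E(A)$) do not by themselves encode the essential structural fact you need.

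The fact you are missing --- and which the paper supplies --- is a bound on the \emph{marginal} entropy: $H(D_1) \le 4\log|A| - \log E(A)$ (and likewise for $D_2$), after which $H(D_1,D_2) \le H(D_1) + H(D_2)$ finishes. The paper obtains this via the other direction of R\'enyi monotonicity, $H(D_1) \le H_{1/2}(D_1)$, together with the computation $\sum_d \PP[D_1 = d]^{1/2} = |A|^2/E(A)^{1/2}$; the latter rests on the identity $\PP[D_1 = d] = r(d)^2/E(A)$ where $r(d) = |\{x \in A : x + d \in A\}|$. If you want to stay purely in Shannon language, the correct substitute is not Shearer but conditional independence: given $D_1 = d$, the pairs $(Q_0,Q_1)$ and $(Q_2,Q_3)$ are i.i.d.\ (each uniform on the fiber of size $r(d)$), so $H(Q \mid D_1) = 2 H(Q_0 \mid D_1)$, and then
\[
  H(D_1) + H(Q) = 2H(D_1) + H(Q \mid D_1) = 2H(D_1) + 2 H(Q_0 \mid D_1) = 2 H(Q_0, Q_1) \le 4\log|A| ,
\]
which is exactly the marginal bound. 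Either way, some observation equivalent to the $H \le H_{1/2}$ step (or the Jensen inequality for $x\log x$ it encodes) is required, and your proposal does not contain it. As written, the proof has a genuine gap at its central step.
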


When $f = 1_X$ is the indicator function of a set, this can be interpreted as follows: if $X$ contains many $2$-dimensional parallelepipeds -- say, $\delta |X|^3$ of them -- then it contains many $3$-dimensional parallelepipeds, i.e.~at least $\delta^4 |X|^4$.  A result of this flavour is implicit in \cite{gt-equivalence}, although the implied proof is quite complicated (involving the Balog--Szemeredi--Gowers theorem and Fre\u\i{}man modelling) and obtains a worse exponent.

We note that Proposition \ref{prop:hard-energy-inequality} does \emph{not} follow from the elementary fact of monotonicity of the Gowers norms, usually stated as $\|f\|_{U^3} \ge \|f\|_{U^2}$.  Indeed, this formula assumes probability measure on $Z$ is being used (and so is false with our conventions); to correct this requires a factor of the measure $\mu(Z)$ of the whole group, which renders the result useless unless $f$ is dense.  In our case, $Z$ could even be infinite and so $f$ far from dense.

\begin{proof}[Proof of Proposition \ref{prop:hard-energy-inequality}]
  Define a function
  \begin{align*}
    R \colon Z \times Z &\to \RR_{\ge 0} \\
                  (a,b) &\mapsto \sum_{x \in Z} f(x) f(x+a) f(x+b) f(x+a+b)
  \end{align*}
  or in other words, the number of parallelepipeds with sides $a$ and $b$, counted weighted by $f$.  We note that
  \begin{equation}
    \label{rprop1}
    \sum_{a,b \in Z} R(a,b) = \|f\|_{U^2}^4
  \end{equation}
  and
  \begin{equation}
    \label{rprop2}
    \sum_{a,b \in Z} R(a,b)^2 = \|f\|_{U^3}^8 \ .
  \end{equation}
  We also have that
  \[
    \sum_{b \in Z} R(a, b) = \left(f \ast f_{-} (a) \right)^2
  \]
  where we write $f_{-}$ for the function $f_{-}(x) = f(-x)$; and since $\sum_a f \ast f_{-}(a) = \|f\|_1^2$, we deduce that
  \begin{equation}
    \label{rprop3}
    \sum_{a \in Z} \left( \sum_{b \in Z} R(a,b) \right)^{1/2} = \|f\|_1^2 \ .
  \end{equation}

  One might hope that in fact \eqref{rprop1}, \eqref{rprop2} and \eqref{rprop3} are the only facts we need to bear in mind to prove the result: that is, that we can forget where $R$ came from and just treat it as an arbitrary non-negative function on $Z \times Z$.  In other words, it would suffice to show the following purely analytic result.

  \begin{lemma}
    \label{key-lemma}
    Let $R \colon Z \times Z \to \RR_{\ge 0}$ be an arbitrary function.  Then
    \[
      \sum_{a, b \in Z} R(a,b)^2 \ge \frac{\left(\sum_{a,b \in Z} R(a,b) \right)^4}{\left[ \sum_{a \in Z} \left(\sum_{b \in Z} R(a,b) \right)^{1/2} \right]^2 \left[ \sum_{b \in Z} \left(\sum_{a \in Z} R(a,b) \right)^{1/2} \right]^2 } \ .
    \]
  \end{lemma}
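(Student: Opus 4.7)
The plan is to translate Lemma~\ref{key-lemma} into an essentially immediate inequality between Rényi entropies. I would set $p(a,b) := R(a,b)/N$, where $N = \sum_{a,b} R(a,b)$, so that $p$ is a probability distribution on $Z \times Z$; let $(X,Y) \sim p$, with marginals $p_X(a) = S(a)/N$ and $p_Y(b) = T(b)/N$, where $S(a) = \sum_b R(a,b)$ and $T(b) = \sum_a R(a,b)$. (If $N = 0$ the claim is trivial, so we may assume $N > 0$.)

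Next, I would rewrite each of the four quantities appearing in the lemma in entropy language, using base-$2$ logarithms throughout. For $M = \sum_{a,b} R(a,b)^2$, Jensen's inequality applied to the concave function $\log$ gives
\[
\log(M/N) \;=\; \log \EE_p[R] \;\ge\; \EE_p[\log R] \;=\; \log N - H(X,Y),
\]
so $M \ge N^2 \cdot 2^{-H(X,Y)}$. For $P = \sum_a S(a)^{1/2}$, a direct computation yields
\[
P \;=\; N^{1/2} \sum_a p_X(a)^{1/2} \;=\; N^{1/2} \cdot 2^{H_{1/2}(X)/2},
\]
where $H_{1/2}$ denotes the Rényi entropy of order $1/2$; squaring gives $P^2 = N \cdot 2^{H_{1/2}(X)}$, and analogously $Q^2 = N \cdot 2^{H_{1/2}(Y)}$.

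Multiplying the three estimates yields
\[
M P^2 Q^2 \;\ge\; N^4 \cdot 2^{H_{1/2}(X) + H_{1/2}(Y) - H(X,Y)},
\]
so the desired inequality $M P^2 Q^2 \ge N^4$ reduces to the purely entropic statement $H_{1/2}(X) + H_{1/2}(Y) \ge H(X,Y)$. This follows by chaining two standard facts: monotonicity of Rényi entropy in its order gives $H_{1/2}(Z) \ge H_1(Z) = H(Z)$ for any discrete random variable $Z$, and Shannon entropy is subadditive, so $H(X,Y) \le H(X) + H(Y) \le H_{1/2}(X) + H_{1/2}(Y)$.

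There is no real obstacle once the entropic reformulation is in hand; the argument is essentially the observation that each of the four terms is exactly the right Rényi-type functional and that the inequalities chain in the correct direction. The one step that repays careful attention is the Jensen bound for $M$, where one averages $R$ itself against the measure $p \propto R$ — this is what makes the resulting bound tight enough to match $P$ and $Q$ — and the numerological match between the exponent $1/2$ in $S(a)^{1/2}$, $T(b)^{1/2}$ and the order $\alpha = 1/2 < 1$ of Rényi entropy needed to beat Shannon.
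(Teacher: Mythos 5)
Your proof is correct and is essentially the same as the paper's: both normalize $R$ to a joint distribution, express $\sum R^2$, $\sum S^{1/2}$, $\sum T^{1/2}$ as (Rényi) entropy quantities, and conclude via monotonicity of Rényi entropy in the order together with subadditivity $H(X,Y) \le H(X)+H(Y)$. The only difference is cosmetic: you phrase the bound for $\sum R^2$ via a Jensen step, while the paper simply invokes the Rényi monotonicity $H_2 \le H_1$ directly.
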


  Fortunately this is true.  Although Lemma \ref{key-lemma} is essentially completely elementary, we are not aware of a completely elementary proof.  Instead, we rely on some standard entropy inequalities.

  \begin{proof}[Proof of Lemma \ref{key-lemma}]
  Define
  \[
    W(a,b) := R(a,b) \left(\sum_{a,b \in Z} R(a,b) \right)^{-1}
  \]
  i.e.~normalizing so that $\sum_{a,b} W(a,b) = 1$.
  We may think of $W(a,b)$ as the joint probability distribution of a pair of random variables $X, Y$ on $Z$, so that $\PP[X=x, Y=y] = W(x,y)$.  Then, the quantities
  \[
    W_X(a) = \sum_{ b \in Z} W(a,b)
  \]
  and
  \[
    W_Y(b) = \sum_{ a \in Z} W(a,b)
  \]
  are precisely the marginal distributions of $X$ and $Y$.

  What we want to prove is then that
  \[
    \sum_{a,b \in Z} W(a,b)^2 \ge \left(\sum_{a \in Z} W_X(a)^{1/2} \right)^{-2} \left( \sum_{b \in Z} W_Y(b)^{1/2} \right)^{-2} \ .
  \]
  This statement is of the following flavour: ``if $X$ and $Y$ are concentrated on a few values then so is $(X,Y)$''.  A more usual way to express the same sentiment might be in terms of the entropy inequality $H(X,Y) \le H(X) + H(Y)$.  But noting the inequalities (which are standard under the title ``monotonicity of R\'enyi entropy'', or are easy convexity arguments)
  \[
    2 \log \left[ \sum_a W_X(a)^{1/2} \right] \ge H(X)
  \]
  and similarly for $Y$; and
  \[
    \log \left[ \sum_{a,b} W(a,b)^2\right] \ge - H(X,Y)
  \]
  this implies the result.
  \end{proof}
  This completes the proof of Proposition \ref{prop:hard-energy-inequality}.
\end{proof}

\section{A result about doubling and reduction modulo $2$}
\label{app:doubling}

We will record in full the proof of the following result, even though we will not need it in the main argument.

\begin{proposition}
  \label{prop:few-values-mod-2}
  Suppose $A \subseteq \ZZ^d$ is a finite set, and let $|A+A| = K |A|$.

  Consider the image of $A$ under reduction modulo $2$; that is,
  \[
    \tilde{A} := \{ x \bmod 2 \colon x \in A \} \subseteq (\ZZ/2\ZZ)^d \ .
  \]
  Then $|\tilde{A}| \le K^6$.
\end{proposition}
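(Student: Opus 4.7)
The plan is to apply the Plünnecke--Ruzsa inequality to the iterated sumset $A+A-A$ and then exploit the fact that doubling annihilates a term modulo $2$. By Plünnecke--Ruzsa, $|A+A|\le K|A|$ implies $|A+A-A|\le K^3|A|$, which is already comfortably below the target $K^6|A|$. So the plan is to show that $|A+A-A|$ is at least $|\tilde A|\cdot|A|$, which will in fact yield $|\tilde A|\le K^3$, hence \emph{a fortiori} $|\tilde A|\le K^6$.

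The key observation is that for any $a,c\in A$, the element $2a-c$ lies in $A+A-A$, and under the reduction map $\pi\colon\ZZ^d\to(\ZZ/2\ZZ)^d$ we have
\[
\pi(2a-c)=2\pi(a)-\pi(c)=\pi(c),
\]
since $2x=0$ and $-x=x$ in $\FF_2^d$. In particular $\pi\bigl(\{2a-c : a,c\in A\}\bigr)=\tilde A$ exactly. Writing $(A+A-A)_\omega$ for the fiber of $A+A-A$ over $\omega\in\FF_2^d$, I would fix, for each $\omega\in\tilde A$, a representative $a_\omega\in A_\omega$, and observe that the set $\{2a-a_\omega:a\in A\}$ has exactly $|A|$ elements (since $a\mapsto 2a$ is injective on $\ZZ^d$) and lies entirely inside $(A+A-A)_\omega$. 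Hence $|(A+A-A)_\omega|\ge|A|$ for every $\omega\in\tilde A$.

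The final step is to sum over $\omega$: since the fibers $(A+A-A)_\omega$ for distinct $\omega\in\tilde A$ lie in distinct cosets of $2\ZZ^d$ and are therefore disjoint,
\[
|A+A-A|\ge\sum_{\omega\in\tilde A}|(A+A-A)_\omega|\ge|\tilde A|\cdot|A|.
\]
Combining with the Plünnecke--Ruzsa bound gives $|\tilde A|\cdot|A|\le K^3|A|$, i.e.\ $|\tilde A|\le K^3\le K^6$ (using $K\ge 1$).

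I don't anticipate any real obstacle here, aside from spotting the right combination to write down: the whole argument hinges on realising that $2a-c$ is the natural element to consider, because scalar multiplication by $2$ precisely cancels one of the three contributions to the mod-$2$ residue. (More naive choices such as $a+b$ or $a-b$ give $\pi$-images living in $\tilde A+\tilde A$ rather than $\tilde A$, which gives a looser fibered count.) It is worth noting that this proof in fact yields $K^3$ rather than $K^6$; presumably the author states the weaker bound because the proposition is included only for general interest and is not used in the main argument.
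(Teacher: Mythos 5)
Your proof is correct and takes a genuinely different route from the paper. The paper applies the Ruzsa covering lemma with $X = 2\cdot A$ and $Y = 2A-2A$ to cover $2A - 2A$ by at most $K^6$ translates of $2\cdot(A-A) \subseteq 2\ZZ^d$ (Corollary \ref{cor:somewhat-convex}); since each translate occupies a single residue class modulo $2$, the image of $2A - 2A$, and hence of $A$, has at most $K^6$ residues. Your argument instead directly lower-bounds each fiber of $A+A-A$ over $\omega \in \tilde A$ by the injective image $\{2a - a_\omega : a\in A\}$, which has size $|A|$ and (crucially) sits entirely in the fiber over $\omega$ because $2\pi(a)$ vanishes in $\FF_2^d$; summing fibers and invoking Pl\"unnecke--Ruzsa on $2A - A$ gives $|\tilde A| \le K^3$. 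Your approach is both more elementary (no covering lemma) and quantitatively sharper; what the paper's formulation buys is the conceptual framing it highlights in the surrounding remarks, namely that $A-A$ behaves like a ``roughly convex'' set satisfying $2X \subseteq 2\cdot X + S$ with $|S|$ small, and a uniform extension to residues modulo $N$ via the covering bound $K^{4+N}$. Your argument also generalizes to modulo $N$, using $Na - (N-1)a_\omega \in NA - (N-1)A$, giving $|\tilde A| \le K^{2N-1}$, which is sharper for $N \le 4$ but weaker for large $N$.
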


Our key ingredient will be a well-known covering argument due to Ruzsa \cite{ruzsa-covering}, which we state in its most general form.

\begin{lemma}
  Let $X$, $Y$ be finite subsets of the same abelian group $Z$.  

  Then there exists an integer $k$, $k \le |X+Y|/|X|$, and elements $\{y_1,\dots,y_k\} \subseteq Y$ such that
  \[
    Y \subseteq \bigcup_{i=1}^k (y_i + X - X) \ .
  \]
\end{lemma}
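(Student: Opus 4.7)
The plan is to run the standard greedy maximal-packing argument. I would choose $\{y_1, \dots, y_k\} \subseteq Y$ to be a \emph{maximal} subset with the property that the translates $y_1 + X, \dots, y_k + X$ are pairwise disjoint subsets of $Z$. Such a maximal subset exists since $Y$ is finite (one can build it greedily: start with the empty collection, and at each step, if there is some $y \in Y$ whose translate $y+X$ is disjoint from all previously chosen translates, adjoin it; otherwise stop).

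The size bound comes from the disjointness: since each $y_i + X$ has cardinality $|X|$ and all lie inside $Y + X$, we get
\[
  k |X| = \left| \bigsqcup_{i=1}^k (y_i + X) \right| \le |Y + X| = |X + Y|,
\]
so $k \le |X+Y|/|X|$ as required.

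For the covering property, let $y \in Y$ be arbitrary. By maximality, the translate $y + X$ must fail to be disjoint from every previously chosen translate, so there exists some $i$ with $(y + X) \cap (y_i + X) \ne \emptyset$. (If $y$ is itself one of the $y_i$, this is trivial since $y + X$ meets itself.) Picking an element in the intersection yields $y + x = y_i + x'$ for some $x, x' \in X$, so $y - y_i = x' - x \in X - X$, i.e., $y \in y_i + (X - X)$. Hence $Y \subseteq \bigcup_{i=1}^k (y_i + X - X)$.

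I do not expect a significant obstacle: the argument is essentially one short paragraph, and the only subtlety is remembering that disjointness of $y_i + X$ and $y_j + X$ (for $i \ne j$) forces $y_j - y_i \notin X - X$ in the \emph{strict} sense, but we don't actually need this; we only need the \emph{forward} direction, that failure of disjointness gives membership in $y_i + X - X$. One small point worth being careful about is whether we should allow $y$ to coincide with some $y_i$ in the definition of maximality (yes, but it doesn't matter — the conclusion $y \in y_i + (X-X)$ is trivial in that case since $0 \in X - X$ as long as $X$ is nonempty; if $X$ is empty the statement is vacuous since $|X+Y|/|X|$ is undefined, and in this degenerate case $k = 0$ and $Y$ must also be treated trivially, though the lemma is typically stated for nonempty $X$).
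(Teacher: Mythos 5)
Your proof is correct and is essentially identical to the paper's: both select a maximal subset $\{y_1,\dots,y_k\}\subseteq Y$ with the translates $y_i+X$ pairwise disjoint, use maximality to get the covering by $y_i+X-X$, and use disjointness inside $X+Y$ to bound $k$. The extra remarks about degenerate cases are fine but not needed.
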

We briefly recall the proof.
\begin{proof}
  The trick is to select a maximal set $\{y_1, \dots, y_k\} \subseteq Y$ such that the sets $(y_i + X)$ are disjoint.  By maximality, for every $y \in Y$ there exists $i \in [k]$ such that $(y + X) \cap (y_i + X) \ne \emptyset$.  Equivalently, $y \in y_i + X - X$.  Furthermore, note that $(y_i + X)$ are disjoint subsets of $X+Y$, each of size $|X|$, so clearly $k |X| \le |X+Y|$ as required.
\end{proof}

We deduce the following, which is a slight variant on the well-known result that if $A$ has small doubling then $A-A$ is an approximate group.

\begin{corollary}
  \label{cor:somewhat-convex}
  Suppose $A$ is a finite subset of an abelian group $Z$, and $|A+A| = K |A|$.  Then $(2A-2A)$ is covered by at most $K^6$ translates of $2 \cdot (A - A)$.
\end{corollary}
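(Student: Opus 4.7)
The plan is to apply the Ruzsa covering lemma just stated with a carefully chosen pair of sets $X, Y$. Take $Y = 2A - 2A$ (the iterated sumset $A+A-A-A$) and $X = \{2a : a \in A\}$ (the dilation of $A$ by $2$, not the sumset $A+A$). Then
\[
  X - X \ = \ \{2a - 2a' : a, a' \in A\} \ = \ 2 \cdot (A - A),
\]
so the covering lemma produces exactly a covering of $2A - 2A$ by translates of $2 \cdot (A-A)$, with the number of translates bounded by $k \le |X+Y|/|X|$.

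It then remains to check that $|X+Y|/|X| \le K^6$. For the numerator, the identity $2a = a + a$ gives the inclusion $X \subseteq A + A$, hence
\[
  X + Y \ \subseteq \ (A+A) + (A + A - A - A) \ = \ 4A - 2A
\]
in the iterated sumset sense, and the Pl\"unnecke--Ruzsa inequality yields $|4A - 2A| \le K^{4+2} |A| = K^6 |A|$. For the denominator, in the intended setting $Z = \ZZ^d$ (and more generally whenever $Z$ is $2$-torsion-free) the doubling map $a \mapsto 2a$ is injective on $A$, so $|X| = |A|$. Combining the two bounds gives $k \le K^6$, as required.

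The one mildly subtle point is matching the ``dilation'' $2 \cdot (A-A)$ appearing in the desired conclusion with the ``$X-X$'' produced by the covering lemma; this is exactly what forces the unusual choice $X = \{2a : a \in A\}$ rather than the more standard $X = A$ or $X = A + A$. Once this choice is made, everything else is routine Pl\"unnecke--Ruzsa bookkeeping; there is no serious obstacle.
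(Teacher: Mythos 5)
Your proposal is correct and matches the paper's proof essentially verbatim: same choice of $X = 2\cdot A$ and $Y = 2A-2A$ in the Ruzsa covering lemma, same observation that $X - X = 2\cdot(A-A)$, and the same Pl\"unnecke bound $|X+Y| \le |4A-2A| \le K^6|A|$. You are in fact slightly more careful than the paper in flagging that $|X| = |A|$ requires the doubling map to be injective on $A$ (automatic in $\ZZ^d$ but not in a general abelian group $Z$), a hypothesis the paper uses implicitly.
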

Note by $2 \cdot X$ we mean the \emph{dilate} $\{ 2 x \colon x \in X\}$, as opposed to the sumset $2 X = X + X$.
\begin{proof}
  We apply the Ruzsa covering lemma with $X = 2\cdot A$ and $Y = 2A-2A$.  We deduce that $Y$ is covered by $k$ translates of $2 \cdot A - 2 \cdot A = 2 \cdot(A-A)$, where
  \[
    k \le |2 \cdot A + 2A - 2A|/|A| \le |2A + 2A - 2A|/|A| \le K^6
  \]
  where we have used the obvious inclusion $2 \cdot A \subseteq 2 A$ and Pl\"{u}nnecke's inequalities (see e.g.~\cite[Corollary 6.29]{tao-vu}).
\end{proof}

\begin{remark}
  Our preferred way of thinking about this result is that $A-A$ satisfies some kind of rough convexity.  Indeed, for a convex subset $X \subseteq \RR^d$ we have $2 \cdot X = 2 X$.  It is fairly clear this cannot happen for non-trivial subsets of $\ZZ^d$, since there is a parity obstruction to writing $x_1 + x_2 = 2 x_3$ for general $x_1,x_2,x_3 \in X$.  However, if $X \subseteq \ZZ^d$ were the intersection of $\ZZ^d$ with a convex subset of $\RR^d$, we might expect the slight weakening $2 X \subseteq 2 \cdot X + \{-1,0,1\}^d$ to hold.

  It is this property that the previous corollary seeks to emulate.
\end{remark}

Our original proposition is a trivial consequence of Corollary \ref{cor:somewhat-convex}.

\begin{proof}[Proof of Proposition \ref{prop:few-values-mod-2}]
  By Corollary \ref{cor:somewhat-convex} we have that $2A - 2A \subseteq 2 \cdot (A - A) + S$ for some set $S$, $|S| \le K^6$.  Therefore $2A - 2A$ takes at most $K^6$ distinct values modulo $2$.  Equivalently, $|2 \tilde{A} - 2 \tilde{A}| \le K^6$.  It follows trivially that $|\tilde{A}| \le K^6$.
\end{proof}

\begin{remark}
  The same argument shows that the number of residues modulo $N$ is bounded by $K^{4+N}$ for any positive integer $N$.
\end{remark}

\bibliography{master}{}
\bibliographystyle{halpha}
\end{document}